\newcommand{\eps}{\varepsilon}
\newtheorem{cond}{Condition}%[section]
\newcommand{\1}{\id}
\renewcommand{\cut}[1]{}
\newcommand{\LastUpdate}{\eurtoday\ at \thistime}
\theoremstyle{plain}
\newtheorem{theo}{Theorem}%[section]g
\theoremstyle{definition}
\theoremstyle{remark}
\def\P{{\mathbf P}}
\def\1{{\mathbf 1}}
\def\eps{\varepsilon}
\let\phi=\varphi
\begin{document}
%\begin{frontmatter}

\begin{center}
\baselineskip=5mm
\textbf
{\Large
{%\textsf
{Dynamical systems \\
with heavy-tailed random parameters}}}
 
\vskip1cm
\parbox[t]{14cm}{
Vladimir {\sc   Belitsky}$^{a}$\\
 Mikhail {\sc  Menshikov}$^{b}$\\
  Dimitri {\sc Petritis}$^c$ \\ Marina {\sc  Vachkovskaia}$^{d}$\\
\vskip5mm   
{\scriptsize  
\baselineskip=5mm
a. Instituto de Matem{\'a}tica e Estat{\'\i}stica,
Universidade de S{\~a}o Paulo, rua do Mat{\~a}o 1010, CEP
05508--090, S{\~a}o Paulo, SP, Brazil,
belitsky@ime.usp.br

b. Department of Mathematical Sciences,
University of Durham,
South Road,
Durham DH1 3LE,United Kingdom\\
Mikhail.Menshikov@durham.ac.uk

%\vskip5mm
c. Institut de Recherche
Math\'ematique, Universit\'e de Rennes I,
Campus de Beaulieu, 
35042 Rennes, France, dimitri.petritis@univ-rennes1.fr

d. Department of Statistics,
 Institute of Mathematics, Statistics and Scientific Computation,
University of Campinas--UNICAMP, Rua Sergio Buarque de Holanda, 651
  Campinas, SP, CEP 13083-859, Brazil\\
   marinav@ime.unicamp.br

}}
\vskip10mm
\LastUpdate
\end{center}
\vskip5mm
%\titlerunning{Continuous-time Markov chains}

\selectlanguage{british}

\begin{abstract}
{\scriptsize
Motivated by the study of the time evolution of random dynamical systems arising in a vast variety of domains
--- ranging from physics to ecology ---, we establish conditions for the occurrence of a non-trivial asymptotic behaviour for these systems in the absence of an ellipticity condition.  More precisely, we classify these systems according to their type and --- in the recurrent case --- provide with sharp conditions quantifying the nature of recurrence by establishing  which moments of passage times exist and which do not exist.  The problem is tackled by  mapping the random dynamical systems into Markov chains on $\BbR$ with heavy-tailed innovation and then using powerful methods stemming from  Lyapunov functions to map the resulting Markov chains into positive semi-martingales. 
\\[0.8cm]
\textbf{Keywords:} Markov chains, recurrence, heavy tails, moments of passage times, random dynamical systems.\\[.2cm]
\textbf{AMS 2010 subject classifications:} Primary: 60J05; (Secondary: 37H10, 34F05).
}
\end{abstract}

\section{Introduction}
\subsection{Motivation}
The  theory of dynamical systems aims at describing the time evolution of a rich variety of systems: physical, chemical, ecological, biological, social, economical,  financial, computational etc.\ by sampling the continuous-time evolution at discrete time epochs. The evolution during a unit of  time is encoded into a nonlinear transformation $T$ from some metric space $\BbX$ into itself (usually $\BbX\subseteq \BbR$ (or $\BbR^d$), equipped with its Borel $\sigma$-algebra). Thus, generically, a dynamical system is described by a sequence $(X_n)_{n\in\BbN}$ of  state variables $X_n\in\BbX$ defined by the iteration
$X_{n+1}=T(X_n)$, for  $n\geq 0$.  

The \textit{nonlinearity} of $T$ induces a chaotic behaviour on the trajectory  $(X_n)_{n\in\BbN}$ and although the above evolution is purely deterministic, one can prove, under some conditions on $T$, 
ergodic theorems,  central limit theorems etc.\ (see for instance \cite{LasotaMackey1994}).

\cut{Even the evolution can be described by a \textit{deterministic Markovian  kernel}   $P(x,B)=\BbP(X_{n+1}\in B| X_n=x)= \delta_{T(x)}(B)$, for all Borel subsets $B\in\cB(\BbX)$  \cite{LasotaMackey1999} (this kernel is the conditional spectral measure of $T$). 
}

In realistic models, the transformation $T$ is not universal but depends on a certain number of external parameters, modelling the effect of the environment. Since the dynamics of the environment is complicated and the  control on it is poor, 
it is very natural to assume that the control parameters are random \cite{AthreyaMajumdar2003}. Let  $(\BbA, \cA)$ be a measurable space and suppose that $(A_n)_{n\in\BbN}$ are a sequence of $\BbA$-valued independent identically distributed random variables, defined on some abstract probability space, having common law $\nu $ and $(T_a)_{a\in\BbA}$ a family of transformations $T_a:\BbX\rightarrow \BbX$ indexed by the set $\BbA$. Then, a random dynamical system driven by the sequence $(A_n)_{n\in\BbN}$ reads
$X_{n+1}=T_{A_{n+1}}(X_n)$, for $n\geq 0$. 
\cut{The evolution kernel of the dynamical system becomes subsequently a genuine 
stochastic kernel 
\[P(x,B)=\BbP(X_{n+1}\in B| X_n =x;\cA_n)= \int_\BbA\BbP(A_{n+1}\in da)\delta_{T_{a}(x)}(B)= \int_\BbA \nu (da)\delta_{T_{a}(x)}(B),\] where $\cA_n=\sigma(A_1, \ldots, A_n)$ and $(X_n)$ a genuine Markov chain. (We assume in this paper that there exists a probability space $(\Omega, \cF, \BbP)$ sufficiently large to hold all the random variables of the problem, i.e.\ $(X_n)$ and $(A_n)$). }
Our work is motivated by  models stemming from a subclass of multiplicative transformations that have been thoroughly studied in the literature;  namely, we assume that  $\BbA=\BbR^+$, $\BbX=\BbR^+$, and there exists a single continuous (on $[0,\infty[$) and differentiable (on $]0,\infty[$) transformation $T:\BbR^+\rightarrow \BbR^+$,  such that  the whole family
is defined through $T_a(\cdot) =T(a \ \cdot)$ for all $a\in\BbA$.  This class of models have been studied in \cite{Athreya2004, AthreyaDai2000, AthreyaDai2002} under the condition of uniform ellipticity, reading $T'(0+)=C>0$. 
When the uniform ellipticity condition is not satisfied, the situation is considerably harder even for deterministic dynamical systems 
\cite{HasselblattKatok2002}. 

The novelty of our paper lies in the fact that we treat a class of models where \textit{uniform ellipticity fails} (i.e.\ allowing $T'(0+)=0$). 
We  are able to answer the question whether the 
 process visits a small region near the origin in finite time; this result constitutes  the main step towards establishing that  the invariant measure of the stochastic dynamical system
 $(X_n)_n$  generated by the recursive 
relation  $X_{n+1}=T(A_{n+1} X_n)$ is  the Dirac mass, $\delta_0$, concentrated at 0.

\subsection{Description of the model}
Suppose that there exists a universal mapping $f:\BbR^+\rightarrow \BbR^+$ --- verifying certain conditions that will be precised later --- allowing to define the whole family of transformations through $T_a(x)=axf(ax)$, for $a\in\BbA$ and $x\in\BbR^+$.  We arrive thus at the following random dynamical 
system 
$X_{n+1}=A_{n+1} X_n f(A_{n+1}X_n)$,
where $(A_n)_{n\geq 1}$ are a sequence of independent and identically distributed  $\BbR^+$-valued random variables with law $\nu$. Not to complicate unnecessarily the model, we assume that $\nu$ has always a density, with respect to
either  the Lebesgue measure on the non-negative axis or the counting measure of some infinitely denumerable unbounded subset of $\BbR^+$.  We address the question about  the asymptotic behaviour of $X_n$, as $n\to \infty$. The situtation $\lim_{n\rightarrow \infty}X_n=0$ has a special significance since can be interpreted  as the extinction of certain natural resources, or the bankruptcy of certain financial assets, etc. The dual situation of 
$\lim_{n\rightarrow \infty}X_n=\infty$ can also be interpreted as the proliferation of certain species, or the creation of 
instabilities due to the formation of speculative bubbles, etc.\ (see \cite{BhattacharyaMajumdar2007} for instance).

Since the previous Markov chain is multiplicative, it is natural to work at logarithmic scale and consider the additive version of the dynamical system $\xi_{n+1}= \tau_{\alpha_{n+1}} (\xi_n)$; here $\xi_n=\ln X_n$, $\alpha_{n+1}=\ln A_{n+1}$,  $\tau_\alpha(\xi)= \xi+\alpha+ \psi(\xi+\alpha)$, with   $\psi(z)=\ln f(e^z)$, for $z\in\BbR$. Therefore, the Markov chain becomes now an $\BbR$-valued one reading
$\xi_{n+1}= \xi_n+\alpha_{n+1}+\psi(\xi_n+\alpha_{n+1}).$
Obviously, 
$\xi_n\to +\infty \ \textrm{a.s.}\ \Leftrightarrow X_n\to+\infty \ \textrm{a.s.}$
and 
$\xi_n\to -\infty \ \textrm{a.s.}\ \Leftrightarrow X_n\to0 \ \textrm{a.s.}$

An important  class of non-uniformly elliptic random dynamical systems are those $(X_n)$ that --- when considered at logarithmic scale as above --- have 
$\psi(t)=\pm |t|^\gamma$, for $0<\gamma<1$ and $t\in \BbR^+$.
 Now using the elementary inequalities (see \cite[\S19, p.\ 28]{HardyLittlewoodPolya1988}, for instance)
$a^\gamma-|b|^\gamma \le (a+b)^\gamma \le a^\gamma+|b|^\gamma,$
it turns out that the dynamical system reads 
$\xi_{n+1}=\xi_n+\alpha_{n+1}\pm|\xi_n+\alpha_{n+1}|^\gamma= \xi_n+\alpha_{n+1}\pm|\xi_n|^\gamma+ \cO(\alpha_{n+1}^\gamma).$
Now, for $\gamma\in ]0,1[$, the term $\cO(\alpha_{n+1}^\gamma)$ in the above expression turns out to be subdominant. 

For the aforementioned  reasons, 
we  study in this paper  the  Markov chains on $\BbX=\BbR^+$ defined by   one of the following recursions
\begin{align*}
\zeta_{n+1}&= (\zeta_n+\alpha_{n+1} -\zeta_n^\gamma)^+, \ \textrm{ or}\\
 \zeta_{n+1}&= (\zeta_n+\alpha_{n+1} +\zeta_n^\gamma)^+,
 \end{align*}
with $\gamma\in ]0,1[$ and $\zeta_0=x$ a.s.;  here $z^+=\max(0,z)$ and $x\in\BbX$.
The sequence $(\alpha_n)_{n\geq 1}$ are a family of independent $\BbR$-valued random variables having common distribution. This distribution can be supposed discrete or continuous but will always be assumed having one- or two-sided heavy tails. The heaviness of the tails is quantified by the order of the fractional  moments failing to exist.
 
\subsection{Main results}
In all statements below, we make the  
\begin{glas}
The sequence $(\alpha_n)_{n\in\BbN}$ are independent and identically distributed real random variables. 
The common law is denoted by $\mu$ and is supposed to be $\mu\ll \lambda$ where $\lambda$ is a reference measure on $\BbR$;
we denote by $m=\frac{d\mu}{d\lambda}$ the corresponding density. Additionally, $\mu$ is supposed to be heavy-tailed (preventing thus integrability of the random variables $\alpha_n$).
\end{glas}

Let $(\zeta_n)_{n\in\BbN}$ be a Markov chain on a measurable space $(\BbX, \cX)$; denote, as usual, by $\BbP_x$ the probability on the trajectory space conditioned to $\zeta_0=x$ and, for $A\in\cX$, define $\tau_A=\inf\{n\geq 1:\zeta_n\in A\}$. Our paper is devoted in establishing conditions under which the time $\tau_A$ is finite (a.s.)\ or infinite (with strictly positive probability) and in case it is a.s.\ finite which of its moments exist. These results constitute the first step  toward establishing more general results on the Markov chain like recurrence or transience, positive recurrence and existence of invariant probability, etc. However, the latter need more detailed conditions on the communication structure of states of the chain like  $\phi$-accessibility, $\phi$-recurrence, maximal irreducibility measures and so on (see \cite{Orey1971,Nummelin1984,MeynTweedie1993} for instance). All those questions are important but introduce some technicalities that blur the  picture that we wish to reveal here, namely that questions on $\tau_A$ can be answered with extreme parsimony on the  hypotheses imposed on the Markov chain,  by using Lyapunov functions. As a matter of fact, the only communication property imposed on the Markov chain is mere accessibility
whose definition is recalled here for the sake of completeness.
 \begin{defi}
 \label{def:accessibility}
 Let $(Z_n)$ be a  Markov chain on $(\BbX,\cX)$ with stochastic kernel $P$ and $A\in\cX$.  Denote by $\BbP$ the probability on its trajectory space induced by $P$ and by $\BbP_x$ the law of trajectories conditioned on  $\{Z_0=x\}$. We say that $A$ is \textbf{accessible} from $x\not \in A$, if $\BbP_x(\tau_A<\infty)>0$.
 \end{defi}

\begin{theo}
\label{thm:precise-right}
Let $(\zeta_{n+1})$ be the Markov chain defined by the recursion 
\[\zeta_{n+1}= \zeta_n -\zeta_n^\gamma +\alpha_{n+1}, \ n\geq 0,\]
where $0<\gamma<1$ and the random variables $(\alpha_n)$ have a common law $\mu$ supported by $\BbR_+$, satisfying the condition $\mu([0,1])>0$ and whose density with respect to the Lebesgue measure, for large $y>0$, reads $m(y) =\id_{\BbR^+}(y) c_y y^{-1-\theta}$, with $\theta\in ]0,1[$. 
Let $a>1$ and denote by $A:=A_a=[0,a]$. Then $A$ is accessible from any point $x>a$. Additionally, the following hold.
\begin{enumerate}
\item Assume that there exist constants $0<b_1< b_2<\infty$ such that  $b_1\leq c_y \leq b_2$ for all $y\in\BbX$.
\begin{enumerate}
\item If $\theta>1-\gamma$  then $\BbP_x(\tau<\infty)=1$.
Additionally, 
\begin{itemize}
\item if $q<\frac{\theta}{1-\gamma}$ then $ \BbE_x(\tau_A^q)<\infty$, and
\item if $q\geq\frac{\theta}{1-\gamma}$ then $\BbE_x(\tau_A^q)=\infty$.
\end{itemize}
\item If $\theta<1-\gamma$  then $\BbP_x(\tau_A<\infty)<1$.
\end{enumerate}
\item Assume further that $\lim_{y\rightarrow\infty} c_y=c>0$ and $\theta=1-\gamma$.
\begin{enumerate}
\item If  $c\pi\csc(\pi\theta) <\theta$ then $\BbP_x(\tau_A<\infty)=1$. 
Denote 
$K_{\delta,\theta}=\frac{\Gamma(1-\theta)\Gamma(\theta-\delta)}{\theta\Gamma(1-\delta)}$; then there exists a unique $\delta_0\in ]0, \theta[$ such that $cK_{\delta_0,\theta}=1$. Additionally, 
\begin{itemize}
\item if $q <\frac{\delta_0}{1-\gamma}$  then $\BbE_x(\tau_A^q)<\infty$, and
\item  if $q >\frac{\delta_0}{1-\gamma}$ then  $\BbE_x(\tau_A^q)=\infty$.
\end{itemize}
\item If $c\pi\csc(\pi\theta) >\theta$ then  $\BbP_x(\tau_A<\infty)<1$.

\end{enumerate}
\end{enumerate}
\end{theo}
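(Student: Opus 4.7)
The plan is to attack the statement by the standard Lyapunov/semimartingale machinery for heavy-tailed Markov chains, using the moment criteria of Aspandiiarov--Iasnogorodski--Menshikov to pass from drift inequalities on power-type test functions $f(x)=x^{\beta}$ to existence or non-existence of moments of $\tau_A$, and a bounded-supermartingale stopping argument for the non-accessibility statements. Accessibility of $A=[0,a]$ from any $x>a$ is straightforward: since $\mu$ has a density and $\mu([0,1])>0$, there exists $\eps_0>0$ with $\mu([0,\eps_0])>0$; conditioning on the event that $\alpha_k\in[0,\eps_0]$ for $k=1,\dots,N$, the recursion yields $\zeta_{k+1}\le\zeta_k-\zeta_k^\gamma+\eps_0$, and as long as $\zeta_k>\max(a,(2\eps_0)^{1/\gamma})$ this is strictly less than $\zeta_k-\eps_0$, so after finitely many steps the trajectory enters $A$ on an event of strictly positive probability.

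\smallskip

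The core computation is the one-step drift of $f(x)=x^{\beta}$ for $\beta\in(0,\theta)$. Splitting at $y=\eps x$ and expanding,
\[
\BbE_x[f(\zeta_1)-f(\zeta_0)]=\int_0^{\eps x}\!\bigl[(x-x^{\gamma}+y)^{\beta}-x^{\beta}\bigr]m(y)\,dy+\int_{\eps x}^{\infty}\!\bigl[(x-x^{\gamma}+y)^{\beta}-x^{\beta}\bigr]m(y)\,dy,
\]
the first integral is treated by a second-order Taylor expansion around $x-x^{\gamma}$ and produces the deterministic drift contribution $-\beta x^{\beta-1+\gamma}+o(x^{\beta-1+\gamma})$; the second is treated by the change of variables $y=xu$, using $m(xu)\sim c_{xu}(xu)^{-1-\theta}$, and reduces in the limit $x\to\infty$, $\eps\to 0$ to $c\,x^{\beta-\theta}\int_0^{\infty}[(1+u)^{\beta}-1]u^{-1-\theta}\,du$. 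An integration by parts with $dV=u^{-1-\theta}du$, $V=-u^{-\theta}/\theta$ and $U=(1+u)^{\beta}-1$ evaluates the last integral via the beta function:
\[
\int_0^{\infty}[(1+u)^{\beta}-1]u^{-1-\theta}du=\frac{\beta}{\theta}\int_0^\infty u^{-\theta}(1+u)^{\beta-1}du=\frac{\beta\Gamma(1-\theta)\Gamma(\theta-\beta)}{\theta\Gamma(1-\beta)}=\beta K_{\beta,\theta}.
\]
Hence the drift admits the two-term asymptotics
\[
\BbE_x[f(\zeta_1)-f(\zeta_0)]=-\beta x^{\beta-(1-\gamma)}+c_x\,\beta K_{\beta,\theta}\,x^{\beta-\theta}+o\bigl(x^{\beta-(1-\gamma)}\vee x^{\beta-\theta}\bigr),
\]
with $c_x\in[b_1,b_2]$ in Part 1 and $c_x\to c$ in Part 2.

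\smallskip

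The four subcases now follow by comparing the two exponents. In Part 1(a), $\theta>1-\gamma$ implies the drift term dominates; the estimate $\BbE_x[f(\zeta_1)-f(\zeta_0)]\le-Cx^{\beta-(1-\gamma)}=-Cf(x)^{1-(1-\gamma)/\beta}$ for every $\beta\in(0,\theta)$ triggers the Aspandiiarov--Iasnogorodski--Menshikov moment criterion and yields $\BbE_x[\tau_A^{q}]<\infty$ for all $q<\beta/(1-\gamma)$; taking $\beta\uparrow\theta$ recovers $q<\theta/(1-\gamma)$, and the complementary divergence is obtained from the companion lower-bound criterion with $\beta$ slightly above $\theta$, where the jump term dominates. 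In Part 1(b), $\theta<1-\gamma$, the jump term is of larger order; applying the same expansion to the bounded decreasing test function $g(x)=(1+x)^{-\eta}$ (whose jump contribution becomes negative and dominates the positive drift contribution) turns $g$ into a supermartingale outside $A$, and the usual optional-stopping inequality gives $\BbP_x(\tau_A<\infty)\le g(x)<1$. In Part 2, with $\theta=1-\gamma$, both exponents coincide and the sign of the drift becomes the sign of $cK_{\beta,\theta}-1$. Monotonicity of $\beta\mapsto K_{\beta,\theta}$ on $(0,\theta)$ follows from $\partial_\beta\log K_{\beta,\theta}=\psi(1-\beta)-\psi(\theta-\beta)>0$, with boundary values $K_{0,\theta}=\pi\csc(\pi\theta)/\theta$ and $K_{\beta,\theta}\to\infty$ as $\beta\uparrow\theta$; therefore the hypothesis $c\pi\csc(\pi\theta)<\theta$ of Part 2(a) produces a unique $\delta_0\in(0,\theta)$ with $cK_{\delta_0,\theta}=1$, and every $\beta<\delta_0$ satisfies $cK_{\beta,\theta}<1$, so the argument of Part 1(a) gives $\BbE_x[\tau_A^{q}]<\infty$ for all $q<\beta/(1-\gamma)$, hence for every $q<\delta_0/(1-\gamma)$, while $\beta>\delta_0$ yields the divergence for $q>\delta_0/(1-\gamma)$. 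Under the opposite assumption $c\pi\csc(\pi\theta)>\theta$ of Part 2(b), $cK_{-\eta,\theta}>1$ for small $\eta>0$ and the bounded supermartingale $g(x)=(1+x)^{-\eta}$ of Part 1(b) again yields $\BbP_x(\tau_A<\infty)<1$.

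\smallskip

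The main obstacle is the control of the drift expansion at criticality, where both contributions are of the same order $x^{\beta-\theta}$ and the sharp constant $cK_{\beta,\theta}$ must appear rather than merely its order of magnitude. This demands a uniform estimate on the error $c_{y}-c$ under the rescaling $y=xu$, a careful interchange of the limits $\eps\to 0$ and $x\to\infty$ in the split integral, and an explicit verification that the integration-by-parts identity above converts the jump integral into precisely the Gamma-ratio defining $K_{\beta,\theta}$.
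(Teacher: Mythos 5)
Your overall route --- power-law Lyapunov functions $x^{\beta}$, the split of the jump integral, the integration by parts producing $\beta K_{\beta,\theta}$, and the Aspandiiarov--Iasnogorodski--Menshikov criteria --- is exactly the paper's, and your handling of accessibility, of the recurrence/transience dichotomies 1(b) and 2(b) (modulo the routine normalisation $\BbP_x(\tau_A<\infty)\le g(x)/\inf_{A}g$), of the finite-moment claims, and of the critical case 2(a) is essentially sound; in 2(a) your functions $x^{\beta}$ with $\beta<\delta_0$ and $\beta>\delta_0$ do furnish, respectively, the auxiliary function $g$ and the function $f$ required by the lower-bound criterion, as in the paper.

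The genuine gap is the divergence of moments in Part 1(a). You propose to apply ``the companion lower-bound criterion with $\beta$ slightly above $\theta$, where the jump term dominates.'' This cannot work. For $\beta\ge\theta$ the function $x^{\beta}$ is not in $\Dom_+(P)$, since $\int(x-x^{\gamma}+y)^{\beta}m(y)\,dy$ diverges when $m(y)\asymp y^{-1-\theta}$, so its drift is not even defined; and for every admissible $\beta<\theta$ your own two-term expansion gives a drift $\asymp-\beta x^{\beta-(1-\gamma)}$ plus a term of order $x^{\beta-\theta}$, which is strictly negative for large $x$ precisely because $\theta>1-\gamma$ --- in this regime the jump term \emph{never} dominates, so the hypothesis $Df^{p}\ge0$ of the lower-bound criterion fails for every power function. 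Moreover the theorem asserts divergence also at the boundary $q=\theta/(1-\gamma)$, which no criterion of the form ``$\BbE_x(\tau^q)=\infty$ for $q>p$'' can deliver. The paper uses an entirely different idea here: after the first step the chain sits at $x-x^{\gamma}+\alpha_1$ and from then on dominates the deterministic orbit of $z\mapsto z-z^{\gamma}$ started there, whose passage time to $[0,a]$ is $\asymp(x+\alpha_1)^{1-\gamma}/(1-\gamma)$; hence $\tau_A\ge C\,\alpha_1^{1-\gamma}$ and $\BbE_x(\tau_A^{q})\ge C\,\BbE\bigl(\alpha_1^{q(1-\gamma)}\bigr)=\infty$ as soon as $q(1-\gamma)\ge\theta$, because $\alpha_1$ has no moment of order $\theta$ or higher. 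You need this single-big-jump comparison (or an equivalent lower bound on $\tau_A$) to complete Part 1(a).
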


\begin{theo}
\label{thm:precise-left}
Let $(\zeta_{n+1})$ be the Markov chain defined by the recursion 
\[\zeta_{n+1}= (\zeta_n +\zeta_n^\gamma -\alpha_{n+1})^+ \]
for $n\geq 0$,
where $0<\gamma<1$ and the common law of the  random variables  $(\alpha_n)$ is supported by $\BbR_+$ and  has density $m$ with respect to the Lebesgue measure verifying $m(y)=\id_{\BbR+}(y)c_y{y^{-1-\theta}}$ for large $y>0$, with $\theta\in ]0,1[$ . 
Assume further that $\lim_{y\rightarrow\infty} c_y=c>0$. Then the state $0$ is accessible and
\begin{enumerate}
\item 
If  $\theta< 1-\gamma$  then $\BbE_x(\tau_0^q)<\infty$, for all $q>0$. 
\item If  $\theta= 1-\gamma$  then $\BbP_x(\tau_0<\infty)=1$. Denote\footnote{It is recalled that the transcendental function $\Gamma$,  defined by $\Gamma(z):=\int_0^\infty \exp(-t) t^{z-1}dt$ for $\Re z>0$, can  be analytically continued on $\BbC\setminus\{0, -1, -2, -3,\ldots\}$; its analytic continuation can be expressed by $\Gamma(z)=\int_0^\infty [\exp(-t) -\sum_{m=0}^n \frac{(-t)^m}{m!}] t^{z-1}dt$ for 
$-(n+1) <\Re z <-n$ and $n\in\BbN$ (see \cite[\S1.1 (9), p.\ 2]{ErdelyiMagnusOberhettingerTricomi1981a} for instance).}
$L_{\delta,\theta}= \frac{\Gamma(1+\delta)\Gamma(-\theta)}{\Gamma(1-\theta+\delta)}$; then there exists a unique 
 $\delta_0\in ]0, \infty[$ such that $cL_{\delta_0, \theta}+ \delta_0=0$.
\begin{itemize}
\item If $q<\frac{\delta_0}{\theta}$ then $\BbE_x(\tau_0^q)<\infty$, and 
\item if $q>\frac{\delta_0}{\theta}$ then  $\BbE_x(\tau_0^q)=\infty$.
\end{itemize}
\item If $\theta>1-\gamma$  then $\BbP_x(\tau_0<\infty)<1$.
\end{enumerate}
\end{theo}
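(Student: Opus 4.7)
The plan is to analyze $(\zeta_n)$ by Lyapunov function / semi-martingale techniques, computing the one-step drift of $V(x)=x^\beta$ and then comparing it, case by case, to the three regimes for $\theta$. Accessibility of $0$ is immediate: for any $x>0$ one has $\BbP_x(\zeta_1=0)=\BbP(\alpha_1\geq x+x^\gamma)>0$ since $\mu$ has positive tail.

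The main technical step is to obtain a sharp asymptotic for
\[
\BbE_x[V(\zeta_1)]-V(x)=\int_0^{x+x^\gamma}\!\bigl[(x+x^\gamma-y)^\beta-x^\beta\bigr]m(y)\,dy - x^\beta\,\BbP(\alpha_1>x+x^\gamma).
\]
The approach is to substitute $y=(x+x^\gamma)u$ and split the range of $u$ into $[0,\eps]$ and $[\eps,1]$. On $[0,\eps]$ I would Taylor expand $(1-u)^\beta$ around $0$ and apply the asymptotics $\BbP(\alpha_1>t)\sim\frac{c}{\theta}t^{-\theta}$ and $\BbE[\alpha_1\id_{\alpha_1\leq t}]\sim\frac{c}{1-\theta}t^{1-\theta}$, both valid because $\theta\in(0,1)$. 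On $[\eps,1]$ I would replace $m$ by its tail asymptotic $cy^{-1-\theta}$ and evaluate the resulting Beta-type integral by integration by parts and the analytic continuation of the Gamma function recalled in the footnote, producing the key identity
\[
\int_0^1\bigl[(1-u)^\beta-1\bigr]u^{-1-\theta}\,du=\frac{1}{\theta}+L_{\beta,\theta}.
\]
After cancellation of the $\eps^{-\theta}$-divergences between the two ranges, letting $\eps\to 0$ yields
\[
\BbE_x[V(\zeta_1)]-V(x)=\beta\,x^{\beta-1+\gamma}+c\,L_{\beta,\theta}\,x^{\beta-\theta}+(\text{lower order}),
\]
with $L_{\beta,\theta}<0$ for $\theta\in(0,1)$.

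The three cases then follow from this expansion by standard Lyapunov criteria. In case (1), $\theta<1-\gamma$ gives $\beta-\theta>\beta-1+\gamma$, so the heavy-tail term dominates and the drift is of order $-x^{\beta-\theta}$ for \emph{every} $\beta>0$; the semi-martingale moment criterion then yields $\BbE_x[\tau_0^q]<\infty$ for every $q<\beta/\theta$, and since $\beta$ is arbitrary, for every $q>0$. In case (2), both contributions are of order $x^{\beta-\theta}$ with common coefficient $g(\beta):=\beta+cL_{\beta,\theta}$; one checks $g(0)=-c/\theta<0$ (using $L_{0,\theta}=-1/\theta$) and $g(\beta)\to+\infty$ as $\beta\to\infty$ (using $L_{\beta,\theta}\sim\Gamma(-\theta)\beta^\theta$), which gives existence and uniqueness of the threshold $\delta_0>0$. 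For $\beta<\delta_0$ the drift is negative of order $x^{\beta-\theta}$, giving $\BbE_x[\tau_0^q]<\infty$ for every $q<\delta_0/\theta$ after sending $\beta\to\delta_0^-$. The reverse bound $\BbE_x[\tau_0^q]=\infty$ for $q>\delta_0/\theta$ is obtained by choosing $\beta$ slightly larger than $\delta_0$, so that $V(\zeta_n)$ becomes a submartingale on a large set, and applying a converse Lamperti-type moment criterion. In case (3), the deterministic term $+x^\gamma$ dominates; I would then construct a bounded, non-negative supermartingale of the form $\tilde V(x)=(1+x)^{-\alpha}$ with $\alpha\in(0,\theta+\gamma-1)$ (this range being non-empty precisely because $\theta>1-\gamma$), and conclude by optional stopping that $\BbP_x(\tau_0<\infty)\leq\tilde V(x)<1$ for $x>0$.

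The principal obstacle I foresee is the sharp drift computation, in particular matching the universal constant $\beta$ coming from the deterministic push $+\zeta^\gamma$ with the transcendental constant $cL_{\beta,\theta}$ coming from the heavy tail, together with the delicate cancellation of the $\eps^{-\theta}$-terms between the near-zero and far-from-zero regions of the $y$-integral. A secondary difficulty lies in the lower bound on moments in the critical regime, where one must rule out the possibility that $\tau_0$ has lighter tails than those suggested by the drift; this will require the converse Lyapunov criterion to be supplied with uniform control on the higher-order terms of the expansion and a quantitative hypothesis such as the condition $\lim_{y\to\infty}c_y=c$ assumed in the statement.
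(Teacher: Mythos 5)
Your proposal follows essentially the same route as the paper's own proof: the power Lyapunov function $x^{\delta}$ (truncated below $1$ when the exponent is negative), the same one-step drift expansion $\delta x^{\delta+\gamma-1}+cL_{\delta,\theta}x^{\delta-\theta}$ obtained from the same Beta-integral identity $\int_0^1\bigl[(1-u)^{\delta}-1\bigr]u^{-1-\theta}\,du=\tfrac{1}{\theta}+L_{\delta,\theta}$, and the same Foster--Lyapunov and Aspandiiarov--Iasnogorodski--Menshikov moment criteria in each of the three regimes. The only cosmetic differences are your $\varepsilon$-splitting of the $u$-integral (the paper simply treats $c_y\equiv c$ and notes the correction from $c_y\to c$ is negligible) and your explicit exponent range $\alpha\in(0,\theta+\gamma-1)$ in the transient case, where the paper is terser.
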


\begin{remn}
If $b_1\leq c_y \leq b_2$ but $c_y\not\rightarrow c$ then the conclusions established in the cases of  strict inequalities $\theta<1-\gamma$ or $\theta>1-\gamma$ remain valid. Nevertheless, we are unable to treat the critical case $\theta=1-\gamma$.  
\end{remn}

\begin{remn}
In both the above theorems,  the boundedness or existence of limit conditions on $(c_y)$ imply that the tails have power decay, i.e.\ there exists $C$ such  that the tail estimate $\BbP(\alpha>y)\geq \frac{C}{y^\theta}$ holds. Nevertheless, the control we impose is much sharper because we wish to treat the critical case. If we are not interested in the critical case, the control on $(c_y)$ can be considerably weakened by assuming only the tail estimate. Results established with such weakened control on the tails are given in theorems \ref{thm:systematic-left} and \ref{thm:systematic-right} below. 
\end{remn}

\begin{theo}
\label{thm:systematic-left}
 Let $(\zeta_n)$ be the Markov chain defined by the recursive relation 
\begin{equation}
 \label{eq_zeta1b}
\zeta_{n+1}=(\zeta_n-\zeta_n^\gamma +\alpha_{n+1})^+, \ n\geq 0,
\end{equation}
where $0<\gamma<1$ and the random variables $(\alpha_n)$ have common law with support extending to both negative and positive parts of the real axis. Let $a>1$ and denote by $A:=A_a=[0,a]$. Then $A$ is accessible and the following statements hold.
\begin{enumerate}
\item Suppose that there exist a positive constant
$C$ and a parameter $\theta\in \ ]0,1[$ such that $\P(\alpha_1>y)\le Cy^{-\theta}$. 
If  $\theta>1-\gamma$, then   $\forall q<\frac{\theta}{1-\gamma}$, $\BbE_x(\tau_A^q)<\infty$.
 \item Suppose that  there exist a positive constants $C,C'$ and parameters $\theta, \theta'$ with 
 $0<\theta<\theta'<1$ such that  $\P(\alpha_1>y)\ge C'y^{-\theta}$ and
 $\P(\alpha_1< -y)\le Cy^{-\theta'}$ (the right tails are heavier than the left ones). If   $\theta<1-\gamma$, then 
 $\BbP_x(\tau_A<\infty)<1$.
\end{enumerate}
\end{theo}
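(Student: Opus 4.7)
The strategy is to apply Lyapunov/semimartingale methods to $(\zeta_n)$, reading off the dominant drift contributions from Taylor expansion combined with the polynomial tail bounds on $\alpha$. The accessibility of $A$ is a soft consequence of the recursion: choose $\varepsilon\in(0,1)$ with $p_\varepsilon:=\BbP(|\alpha_1|\le\varepsilon)>0$; on the event $\{|\alpha_{k+1}|\le\varepsilon\}$ we have $\zeta_{k+1}\le\zeta_k-\zeta_k^\gamma+\varepsilon<\zeta_k$ as soon as $\zeta_k^\gamma>2\varepsilon$, and a comparison with the ODE $\dot y=-y^\gamma$ shows that $O(x^{1-\gamma})$ consecutive such small-$|\alpha|$ steps drive $\zeta_n$ into $A$, an event of strictly positive probability by independence.

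For part~(1), fix $\delta\in(0,\theta)$ and take the Lyapunov function $V(x)=(1+x)^\delta$. Splitting the integrand of $\BbE[V(\zeta_{n+1})-V(\zeta_n)\mid\zeta_n=x]$ at the threshold $|\alpha|=x^\gamma$, Taylor expansion on the bulk $\{|\alpha|\le x^\gamma\}$ yields the dominant term $-\delta x^\gamma(1+x)^{\delta-1}$ (the truncated mean $\BbE[\alpha\mathbf{1}_{|\alpha|\le x^\gamma}]=O(x^{\gamma(1-\theta)})$ is negligible compared to $x^\gamma$ thanks to $\BbP(\alpha>y)\le Cy^{-\theta}$), while the subadditivity $(a+b)^\delta\le a^\delta+b^\delta$ together with integration by parts bounds the right-tail contribution by $\BbE[\alpha^\delta\mathbf{1}_{\alpha>x^\gamma}]=O(x^{\gamma(\delta-\theta)})$. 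These two exponents compare by the algebraic inequality $(1-\gamma)(1-\delta)<\gamma\theta$, which fails for $\delta$ small but holds for $\delta$ sufficiently close to $\theta$ precisely because of the hypothesis $\theta>1-\gamma$; one obtains
\[
\BbE\bigl[V(\zeta_{n+1})-V(\zeta_n)\,\bigm|\,\zeta_n=x\bigr]\le -c\,V(x)^{1-(1-\gamma)/\delta}
\qquad(x\text{ large}),
\]
together with a matching $L^p$-bound on the increments of $V$ valid for any $p<\theta/\delta$. Plugging these estimates into the Aspandiiarov--Iasnogorodski--Menshikov passage-time moments criterion yields $\BbE_x(\tau_A^q)<\infty$ for every $q<\delta/(1-\gamma)$; letting $\delta\uparrow\theta$ gives the announced bound.

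For part~(2), the plan is to exhibit a bounded test function $h\colon\BbX\to[0,1]$ satisfying $h\ge\mathbf{1}_A$ and $Ph(x)\le h(x)$ for $x>a$, since optional stopping then gives $\BbP_x(\tau_A<\infty)\le h(x)$ and the conclusion follows as soon as $h<1$ somewhere. I try $h(x)=1\wedge c(1+x)^{-\beta}$ with small $\beta>0$ and $c=(1+a)^\beta$, which reduces the task to establishing the superharmonicity of $(1+x)^{-\beta}$ outside $A$. Decomposing the expected change of $(1+\zeta_{n+1})^{-\beta}$ into bulk ($|\alpha|\le x^\gamma$), right tail ($\alpha>x^\gamma$) and left tail ($\alpha<-x^\gamma$) yields, respectively: a positive bulk contribution of order $\beta x^{\gamma-\beta-1}$ (produced by the downward drift $-x^\gamma$); a negative right-tail contribution of order $-C'(1+x)^{-\beta-\theta}$, derived by integration by parts from $\BbP(\alpha>y)\ge C'y^{-\theta}$ (big positive jumps push $\zeta_{n+1}\gg x$ and make $h$ much smaller); and a positive left-tail contribution of order $Cx^{-\theta'}$, bounded crudely by $\BbP(\alpha<-x^\gamma)$ times $1$. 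Choosing $\beta<\theta'-\theta$ gives $\beta+\theta<\theta'$, making the left tail negligible compared to the right one, and the hypothesis $\theta<1-\gamma$ gives $\gamma-\beta-1<-\beta-\theta$, making the bulk negligible compared to the right tail; the net drift is therefore strictly negative for $x$ large.

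The main difficulty is the drift computation in part~(2): the positive bulk and the negative right-tail contributions sit at comparable polynomial scales, so the sign of the net drift is determined only by the strict inequality $\theta<1-\gamma$, and the positive-part truncation at $\zeta=0$ together with the left-tail boundary term have to be kept under the negative right-tail term by a careful tuning of $\beta$. In part~(1) the analogous pinch point is the algebraic inequality $(1-\gamma)(1-\delta)<\gamma\theta$, which only holds for $\delta$ close to $\theta$ and uses $\theta>1-\gamma$ in an essential way.
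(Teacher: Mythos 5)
Your part~(1) is essentially the paper's argument: a Lyapunov function $x^\delta$ with $\delta<\theta$, a polynomial truncation of $\alpha$, the bound $\mathbb{E}[\alpha^\delta\mathbf{1}_{\alpha>x^\gamma}]=O(x^{\gamma(\delta-\theta)})$ via subadditivity and integration by parts, and the Aspandiiarov--Iasnogorodski--Menshikov criterion with $f=(1+x)^{(1-\gamma)/2}$; your sharper control of the truncated mean ($O(x^{\gamma(1-\theta)})$ instead of the paper's crude $O(x^\beta)$) lets you cut at $x^\gamma$ rather than at $x^\beta$ with $\beta<\gamma$, and your exponent bookkeeping ($\delta$ close to $\theta$, using $\theta>1-\gamma$) is correct. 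Two small caveats you share with the paper: the drift inequality only holds for $x\ge x_0$ large, so the criterion really gives moments of $\tau_{[0,x_0]}$, and one must still descend from $[0,x_0]$ to $[0,a]$; and your accessibility argument assumes $\mathbb{P}(|\alpha_1|\le\varepsilon)>0$, which is not among the hypotheses --- the intended (and simpler) route is a single large negative jump, $\mathbb{P}(\alpha_1<-(x-x^\gamma))>0$.

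Part~(2) has a genuine gap in the left-tail estimate. You bound the contribution of $\{\alpha<-x^\gamma\}$ by $\mathbb{P}(\alpha<-x^\gamma)\cdot 1$ and call it ``of order $Cx^{-\theta'}$'', but $\mathbb{P}(\alpha<-x^\gamma)\le C(x^\gamma)^{-\theta'}=Cx^{-\gamma\theta'}$, not $Cx^{-\theta'}$. Your comparison with the negative right-tail term $-cC'x^{-\beta-\theta}$ then requires $\gamma\theta'>\beta+\theta$, hence $\gamma\theta'>\theta$, which does \emph{not} follow from $\theta<\theta'$ and $\theta<1-\gamma$ (take $\gamma=0.1$, $\theta=0.5$, $\theta'=0.6$: then $\gamma\theta'=0.06<\theta$). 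So as written the positive left-tail term can swamp the negative right-tail term and the superharmonicity of $h$ is not established. The estimate you \emph{want}, $O(x^{-\theta'})$, is true but needs a finer decomposition of $\{\alpha<-x^\gamma\}$ exploiting that $h(\zeta_{n+1})-h(x)$ is of order $1$ only when $\alpha\lesssim -x$ (probability $O(x^{-\theta'})$), is $O(x^{-\beta})$ for $-x/2\le\alpha<-x^\gamma$, and more precisely, integrating $\mathbb{P}(\alpha^->s)\le Cs^{-\theta'}$ by parts against $(1+x-x^\gamma-s)^{-\beta}$ over $s\in[x^\gamma,x-x^\gamma]$ gives a total left-tail contribution $O(x^{-\theta'}+x^{-\beta-\theta'}+x^{-\beta-1+\gamma-\gamma\theta'})$, all of which are $o(x^{-\beta-\theta})$ once $\beta<\theta'-\theta$. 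With that repair your scheme goes through (and is in fact cleaner than the paper's four-block partition at $\pm x^\beta$, whose published constraint $\beta>1/[1-(\theta'-\theta)]$ is itself problematic since that quantity exceeds $1$); but the one-line crude bound you give does not suffice.
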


\begin{theo}
\label{thm:systematic-right}
Assume that the Markov chain $(\zeta_n)$ is defined by the recursive relation 
\begin{equation*}
\zeta_{n+1}=(\zeta_n+\zeta_n^\gamma+\alpha_{n+1})^+, \ n\geq 0,
\end{equation*}
where $0<\gamma<1$ and the random variables $(\alpha_n)$ have common law with support extending to both negative and positive parts of the real axis. Let $a>1$ and suppose that the set  $A:=A_a=[0,a]$ is accessible.
\begin{enumerate}
\item Suppose there exist a positive constant $C$ and a parameter $\theta$ with  $0<\theta<1$, such that  $\P(\alpha_1<-y)\leq Cy^{-\theta}$. 
If   $\theta>1-\gamma$, then $\BbP_x(\tau_A<\infty)<1$.
 \item Suppose there exist positive constant $C, C'$ and parameters $\theta$ and $\theta'$, with $0<\theta<\theta'<1$, such that 
  $\P(\alpha_1>y)\leq C'y^{-\theta'}$ and $\P(\alpha_1<-y)\geq Cy^{-\theta}$. If 
 $\theta<1-\gamma$ then the state $0$ is recurrent and  $\forall q<1$, $\BbE_x(\tau_A^q)<\infty$.
\end{enumerate}
\end{theo}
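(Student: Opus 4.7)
The strategy is the Lyapunov-function / semi-martingale method used throughout the paper, now adapted to the ``upward-drift'' recursion $\zeta_{n+1}=(\zeta_n+\zeta_n^\gamma+\alpha_{n+1})^+$. The two parts call for two different Lyapunov functions of power type: for transience, a bounded function decaying to $0$ at infinity; for recurrence and the moment bound, an unbounded function growing as a small power. In both cases the key computation is
\[
\BbE[f(\zeta_{n+1})-f(x)\mid\zeta_n=x]=\int_{\BbR}\bigl[f((x+x^\gamma+\alpha)^+)-f(x)\bigr]\,d\mu(\alpha),
\]
which I would estimate by splitting the range of $\alpha$ into three regions: the ``typical'' regime $|\alpha|\le x/2$ (handled by Taylor expansion of $f$ around $x+x^\gamma$), the large-negative-jump regime $\alpha<-x/2$ (where the $(\cdot)^+$ may bring $\zeta_{n+1}$ near $0$, controlled via the left-tail hypothesis), and the large-positive-jump regime $\alpha>x/2$ (controlled via the right-tail hypothesis when available).

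\medskip

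For part~(1), I would take $f(x)=(x\vee 1)^{-\beta}$ with $\beta\in(0,\theta-(1-\gamma))$; the interval is nonempty since $\theta>1-\gamma$. The typical-range contribution is a negative drift term of order $-\beta x^{\gamma-1-\beta}$ (the deterministic piece $x^\gamma$ dominates the conditional mean). The large-negative contribution is at most $1\cdot\BbP(\alpha<-x/2)=O(x^{-\theta})$, and the large-positive contribution is nonpositive since $f$ is decreasing. Since $\beta<\theta-(1-\gamma)$ forces $\gamma-1-\beta<-\theta$, the drift dominates and $(f(\zeta_n))_n$ is a nonnegative supermartingale on $\{\zeta_n>x_0\}$ for some threshold $x_0$. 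Doob's optional-stopping theorem applied to $f(\zeta_{n\wedge\tau_A})$ yields $f(x)\ge\inf_{y\in A}f(y)\cdot\BbP_x(\tau_A<\infty)$, and $f(x)\to 0$ at infinity gives $\BbP_x(\tau_A<\infty)<1$ for $x$ large. For intermediate $x\in(a,x_0]$ the strict inequality is obtained by noting that with positive probability the trajectory can reach $\{x>x_0\}$ without visiting $A$ (thanks to the upward drift) and then never return.

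\medskip

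For part~(2), I would use $f(x)=x^r$ with $r\in[\theta,\theta')$; the upper bound $r<\theta'$ ensures $\BbE[f(\alpha^+)]<\infty$. The typical-range estimate is the crux: an integration by parts combining the lower bound on the left tail with the upper bound on the right tail yields $\BbE[\alpha\,\id_{|\alpha|\le x/2}]\sim-Cx^{1-\theta}$, and since $\theta<1-\gamma$ gives $x^{1-\theta}\gg x^\gamma$, Taylor expansion produces a contribution $\sim-rCx^{r-\theta}$. The large-negative-range contribution is nonpositive (the $(\cdot)^+$ can only reduce $f$), while the large-positive-range contribution is $O(x^{r-\theta'})$, dominated by $x^{r-\theta}$ because $\theta<\theta'$. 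Choosing $r=\theta$ therefore yields $\BbE[\Delta f\mid\zeta_n=x]\le-C''<0$ for $x$ large; Foster's criterion then gives recurrence of $A$ and $\BbE_x[\tau_A]\le f(x)/C''<\infty$, and Jensen's inequality yields $\BbE_x[\tau_A^q]<\infty$ for every $q<1$. Recurrence of the particular state $0$ follows because each visit to $A$ carries positive probability that the next jump satisfies $\alpha_{n+1}\le-(x+x^\gamma)$ (via the lower bound on the left tail), forcing $\zeta_{n+1}=0$ infinitely often.

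\medskip

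The principal technical obstacle is the simultaneous control of both tails of $\mu$ in part~(2): the crucial step is to show that $\BbE[\alpha\,\id_{|\alpha|\le x/2}]$ is negative of order $-x^{1-\theta}$, which requires combining the \emph{lower} bound on the left tail with the \emph{upper} bound on the right tail and uses essentially the strict inequality $\theta<\theta'$. A dual difficulty is controlling the large-positive-jump contribution to $\BbE[f(\zeta_{n+1})]$, for which integration by parts forces the constraint $r<\theta'$. In part~(1) the analogous difficulty is absent because $f$ is bounded and monotone decreasing, so the positive-tail contribution has the right sign automatically.
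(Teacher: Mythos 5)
Your part (1) follows essentially the same route as the paper: a bounded, decreasing power Lyapunov function, negative drift supplied by the systematic term $+x^\gamma$, the left-tail upper bound controlling the (only dangerous) event of a big jump back towards the origin, and the positive-jump region harmless by monotonicity. That is sound, apart from a reversed inequality in your justification: $\beta<\theta-(1-\gamma)$ gives $\gamma-1-\beta>-\theta$ (not $<$), which is the direction you actually need for the drift term $-\beta x^{\gamma-1-\beta}$ to dominate the $O(x^{-\theta})$ error for large $x$.

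Part (2) has a genuine gap at its crux. The claim that $\BbE\bigl[\alpha\,\id_{\{|\alpha|\le x/2\}}\bigr]\sim -Cx^{1-\theta}$ does not follow from the hypotheses. Integration by parts gives $\BbE\bigl[\alpha^-\id_{\{\alpha^-\le x/2\}}\bigr]=\int_0^{x/2}\BbP(\alpha^->t)\,dt-\tfrac{x}{2}\,\BbP(\alpha^-\ge x/2)$; the lower tail bound controls the integral from below, but there is \emph{no upper bound on the left tail}, so the subtracted term is uncontrolled. If the left-tail mass sits essentially beyond the truncation level (equivalently, if $y^{\theta}\BbP(\alpha<-y)$ is very large near $y=x/2$, which the hypotheses permit), the restricted mean is $o(x^{1-\theta})$ and can even be $o(x^{\gamma})$, in which case your typical-range contribution is dominated by the positive term $rx^{r-1+\gamma}$ and the supermartingale property fails. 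The correct source of negative drift --- and the only place the lower bound $\BbP(\alpha<-y)\ge Cy^{-\theta}$ can be used --- is exactly the large-negative-jump region that you discard as ``merely nonpositive'': on $\{\alpha<-(x+x^\gamma)\}$ the chain is reset to $0$ and $f$ drops by the full amount $f(x)=x^{r}$, contributing at most $-x^{r}\,\BbP\bigl(\alpha<-(x+x^\gamma)\bigr)\Yleft -Cx^{r-\theta}$, which dominates $x^{r-1+\gamma}$ (since $\theta<1-\gamma$) and the right-tail contributions $O(x^{r-\theta'})$ (since $\theta<\theta'$ and $r<\theta'$). This is precisely the paper's decomposition, via the set $B=\,]-\infty,-x-x^\gamma[$ on which $g$ collapses to $0$. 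With that replacement, the remainder of your plan (choice $r=\theta$, Foster's criterion, Jensen for $q<1$, and the uniform positive probability of jumping from $A$ straight to $0$ to get recurrence of the state $0$) goes through; the paper instead reaches $q<1$ via the moment criterion of Theorem \ref{thm:AIM96} with exponents $p\delta<\theta$ and $\delta>\theta/2$, but both routes give the same range of $q$.
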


\section{Proofs}

\subsection{Results from the constructive theory of Markov chains}

The Markov chains we consider evolve on the set $\BbX=\BbR_+$. Our
 proofs rely on the possibility of constructing  measurable functions $g:\BbX\rightarrow\BbR^+$ (with some special properties regarding their asymptotic behaviour) that are superharmonic with respect to the discrete Laplacian operator $D=P-I$; consequently,  the image of the Markov chain under $g$ becomes a supermartingale outside some specific sets.
 For the convenience of the reader, we state here the principal theorems from the constructive theory, developed in \cite{FayolleMalyshevMenshikov1995} and in \cite{AspandiiarovIasnogorodskiMenshikov1996}, rephrased and adapted to the needs and notation of the present paper. We shall use repeatedly these theorems in the sequel.
 
 In the sequel 
 $(Z_n)$ denotes a  Markov chain on $\BbX$, having stochastic kernel $P$. We denote by
 \[\Dom_+(P):\{f:\BbX\to \BbR^+: f \textrm{ measurable  s.t.\ }\forall x\in\BbX,  \int_\BbX P(x,dy) f(y)<\infty\}.\] 
 We denote by $D=P-I$ the Markov operator whose action  $\Dom_+(P)\ni g \mapsto Dg$ reads 
 \[Dg(x)=\int_\BbX P(x,dy) g(y)- g(x)=  \BbE(g(Z_{n+1})-g(Z_n)|Z_n=x).\] 
  Notice that when $g$ is $P$-superharmonic, then $(g(Z_n))$ is a positive supermartingale.

 \begin{theo}
 [Fayolle, Malyshev, Menshikov \protect{\cite[Theorems 2.2.1 and 2.2.2]{FayolleMalyshevMenshikov1995}}]
 \label{thm:FMM95}
 Let $(Z_n)$ be a  Markov chain on $\BbX$ with kernel $P$ and  for $a\geq 0$, denote by $A:=A_a=[0,a]$. 
 \begin{enumerate}
 \item If there exist a pair $(f,x_0)$, where $x_0>0$ and $f\in\Dom_+(P)$ such that $\lim_{x\rightarrow \infty} f(x)=\infty$, $Df(x)\leq 0$ for all $x\geq x_0$, and $A:=A_{x_0}$ is accessible, then $\BbP_{x_0} (\tau_A<\infty)=1$.
 \item If there exist a pair $(f,A)$, where $A$ is a subset of $\BbX$ and $f\in\Dom_+(P)$ such that
 \begin{enumerate}
 \item $Df(x)\leq 0$ for $x\not \in A$, and
 \item there exists $y\in A^c: f(y) <\inf_{x\in A} f(x)$, 
 \end{enumerate}
 then $\BbP_{x_0} (\tau_A<\infty)<1$.
 \end{enumerate}
 \end{theo}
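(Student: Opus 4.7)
The plan is to derive both items from the single observation that the stopped process
\[
M_n := f\bigl(Z_{n\wedge \tau_A}\bigr)
\]
is a non-negative supermartingale under $\BbP_{x_0}$ as soon as $Df \leq 0$ outside $A$: on $\{n<\tau_A\}$ we have $Z_n\notin A$ and $\BbE[f(Z_{n+1})\mid \mathcal{F}_n]\leq f(Z_n)$ by hypothesis, while on $\{n\geq \tau_A\}$ the process is frozen at $f(Z_{\tau_A})$. Doob's martingale convergence theorem then supplies an almost-sure finite limit $M_\infty\geq 0$.

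Item 2 (the transience criterion) is a short argument from here. Starting from the point $y$ of hypothesis (b), the supermartingale property gives $\BbE_y[M_n]\leq f(y)$ for every $n$. If one assumed $\BbP_y(\tau_A<\infty)=1$, then $M_n$ would coincide with $f(Z_{\tau_A})\geq \inf_{x\in A}f(x)$ for $n$ large enough, almost surely, and Fatou's lemma combined with the supermartingale bound would yield
\[
\inf_{x\in A}f(x) \;\leq\; \BbE_y\bigl[\liminf_n M_n\bigr] \;\leq\; \liminf_n \BbE_y[M_n] \;\leq\; f(y),
\]
contradicting $f(y) < \inf_{x\in A} f(x)$. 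Hence $\BbP_y(\tau_A<\infty)<1$.

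Item 1 (the recurrence criterion) is the real work. Convergence of $M_n$ together with the coercivity $f(x)\to\infty$ implies that, on the event $\{\tau_A=\infty\}$, the trajectory $(Z_n)$ is trapped in a (random) sublevel set $\{f\leq K\}$, since $f(Z_n)=M_n$ converges and hence stays bounded. The aim is then to couple this trapping with the accessibility hypothesis to force $\BbP_{x_0}(\tau_A=\infty)=0$. The natural plan is, for each fixed level $K$, to decompose the event $\{\tau_A=\infty,\ \sup_n f(Z_n)\leq K\}$ by the strong Markov property applied at the successive visits of $(Z_n)$ to points from which $A$ is accessible; each such visit offers an independent positive-probability chance of a subsequent entrance into $A$, so by a conditional Borel--Cantelli argument the event of never reaching $A$ while remaining bounded has zero probability. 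Letting $K\nearrow\infty$ then closes the argument.

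The principal obstacle, as expected, lies in this last step: pointwise accessibility does not a priori supply any uniform lower bound on $\BbP_x(\tau_A<\infty)$ for $x$ ranging over the sublevel set $\{f\leq K\}\setminus A$. The iterative Borel--Cantelli argument therefore has to be localised on nested sublevel sets of $f$, and the measurable selection of the successive return attempts must be organised so that positivity of the hitting probability is preserved when one patches them together. The remainder of the theorem is, by contrast, essentially a soft consequence of non-negative supermartingale convergence combined with Fatou's lemma.
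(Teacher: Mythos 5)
The paper does not prove this statement: it is imported verbatim (rephrased) from Fayolle--Malyshev--Menshikov \cite{FayolleMalyshevMenshikov1995}, so there is no internal proof to compare against, and your attempt has to stand on its own. Your treatment of item~2 does: the stopped process $M_n=f(Z_{n\wedge\tau_A})$ is a non-negative supermartingale, Fatou plus $\BbE_y[M_n]\le f(y)$ contradicts $f(y)<\inf_{x\in A}f(x)$ if $\tau_A$ were a.s.\ finite. That is the standard argument and it is complete.

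Item~1, however, is not proved; you have correctly reduced it to the statement you then flag as ``the principal obstacle'' and leave open, and that reduction is exactly where the content of the theorem lies. The supermartingale convergence step only tells you that on $\{\tau_A=\infty\}$ the trajectory is confined to a sublevel set $\{f\le K\}$; to conclude you must show that a confined trajectory a.s.\ hits $A$, and pointwise accessibility ($\BbP_x(\tau_A<\infty)>0$ for each $x$) gives no uniform lower bound over $\{f\le K\}\setminus A$, as you note. The way this is closed in the source (and implicitly in this paper, whose own definition of $f\to\infty$ demands $\card\, S_n(f)<\infty$, i.e.\ \emph{finite} sublevel sets) is the countable-state pigeonhole: on $\{\tau_A=\infty\}$ the chain is eventually confined to a finite set, hence visits some state $z$ infinitely often; accessibility gives an $n_z$ with $\BbP_z(\tau_A\le n_z)=:p_z>0$, and the strong Markov property at the successive visits to $z$ together with L\'evy's conditional Borel--Cantelli lemma forces $\tau_A<\infty$ a.s.\ on that event --- a contradiction. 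You should either carry out this argument (it is three lines once finiteness of sublevel sets is invoked) or note explicitly that you are in the countable setting; on a genuinely uncountable state space the statement with only pointwise accessibility is delicate and in general needs an extra irreducibility or petite-set hypothesis, so the gap you left is not cosmetic.
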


Let $f:\BbX\to\BbR_+$ and $a>0$. We denote $S_a(f)=\{x\in\BbX: f(x)\leq a\}$, the \textbf{sublevel set} of $f$.  We say that the function \textbf{tends to infinity}, $f\to\infty$, if $\forall n\in\BbN, \card S_n(f)<\infty$.

\begin{theo}
[Aspandiiarov, Iasnogorodski, Menshikov 
\protect{\cite[Theorems 1 and 2]{AspandiiarovIasnogorodskiMenshikov1996}}]
\label{thm:AIM96}
Let $(Z_n)$ be a  Markov chain on $\BbX$ with kernel $P$ and $f\in\Dom_+(P)$ such that $\lim_{x\to\infty} f(x)=\infty$.
\begin{enumerate}
\item If there exist strictly positive constants $a, p, c$ such that the set $A:=S_a(f)$ is accessible, $f^p\in\Dom_+(P)$, and $Df^p(x)\leq -cf^{p-2}(x)$ on $A^c$, then $\BbE_x(\tau_A^q) <\infty$ for all $q<p/2$.
\item It there exist $g\in\Dom_+(P)$ and 
\begin{enumerate}
\item a constant $b>0$ such that $f\leq bg$,
\item constants $a, c_1>0$ such that $Dg(x)\geq -c_1$ on $\{g>a\}$,
\item constants $c_2>0$ and $r>1$ such that $g^r\in\Dom_+(P)$ and $Dg^r(x) \leq c_2 g^{r-1}(x)$ on $\{g>a\}$,
\item a constant $p>0$ such that $f^p\in\Dom_+(P)$ and $Df^p(x) \geq 0$ on $\{f>ab\}$,
\end{enumerate}
then $\BbE_x(\tau^q_{S_{ab}(f)})=\infty$ for all $q>p$.
\end{enumerate}
\end{theo}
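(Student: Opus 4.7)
The plan is to treat the hypotheses as Foster-Lyapunov drift conditions, deduce polynomial tail estimates on the passage times, and then conclude via the integral identity $\BbE_x[\tau^q]=q\int_0^\infty t^{q-1}\BbP_x(\tau>t)\,dt$.

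For Part 1, I read $Df^p(x)\le -c f^{p-2}(x)$ on $A^c$ as the discrete analogue of the generator of a Bessel-like process; heuristically it makes $h=f^2=(f^p)^{2/p}$ into a Lyapunov function with linear negative drift. When $p\ge 2$, the map $y\mapsto y^{2/p}$ is concave and the tangent-line inequality, after conditioning on $Z_n$, yields
\[
Dh(x)\;\le\;\tfrac{2}{p}\,f^{2-p}(x)\,Df^p(x)\;\le\;-\tfrac{2c}{p}\quad\text{on }A^c.
\]
Combined with the accessibility of $A$, Theorem \ref{thm:FMM95}(1) gives $\BbP_x(\tau_A<\infty)=1$ and the Foster estimate $\BbE_x[\tau_A]\le \tfrac{p}{2c}h(x)$. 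To reach moments of order $q<p/2$, I would apply the same concavity identity to $h_q=f^{2q}=(f^p)^{2q/p}$ (where $2q/p<1$), obtaining the sublinear Foster drift $Dh_q(x)\le -c_q\,h_q^{1-1/q}(x)$ on $A^c$. This is the Lamperti-type input that, by comparison with the extinction time of the ODE $\dot y=-c_q y^{1-1/q}$ (which is of order $y(0)^{1/q}$) and a standard supermartingale argument applied to the process $h_q^{1/q}(Z_{n\wedge\tau_A})+(c_q/q)(n\wedge\tau_A)$, converts into the polynomial tail bound $\BbP_x(\tau_A>n)=O(n^{-p/2})$ and hence into $\BbE_x[\tau_A^q]<\infty$ for every $q<p/2$.

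For Part 2, the four conditions on $(f,g)$ are tailored to trap the chain between a sub-ballistic upper bound on $f(Z_n)$ and a submartingale lower bound on $f^p(Z_n)$. From $Dg\ge -c_1$ and $Dg^r\le c_2 g^{r-1}$ on $\{g>a\}$, one constructs a supermartingale from $g^r$ and applies Doob's maximal inequality to get $g(Z_n)\le C(g(x)+n)$ with probability $1-O(n^{1-r})$; condition (a) transfers this to $f(Z_n)\le bC(g(x)+n)$ on the same event. Condition (d) makes $f^p(Z_{n\wedge\tau})$ a submartingale, so $\BbE_x[f^p(Z_{n\wedge\tau})]\ge f^p(x)$. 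Splitting this expectation over $\{\tau\le n\}$ (where $f^p(Z_\tau)\le (ab)^p$) and $\{\tau>n\}$ (where the sub-ballistic upper bound yields $f^p(Z_n)\lesssim n^p$ with high probability) and rearranging produces the lower tail estimate $\BbP_x(\tau>n)\ge c\,n^{-p}$, whence $\BbE_x[\tau^q]=\infty$ for all $q>p$.

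The main obstacle in both parts is the careful bookkeeping in the Jensen-style expansions: the precise exponents $p,\,p-2,\,r,\,r-1$ in the hypotheses are exactly what makes the second-order error terms cancel cleanly, and this is what distinguishes the present sharp statement from a softer drift-only result. The most delicate subcase is $p<2$ in Part 1, where the concavity of $y\mapsto y^{2/p}$ reverses and one must argue via a truncation $f\wedge M$ before passing to the limit $M\to\infty$.
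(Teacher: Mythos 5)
First, note that the paper does not prove this statement: it is quoted (in adapted notation) from Aspandiiarov--Iasnogorodski--Menshikov and used as a black box, so your attempt must stand entirely on its own.

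Part~1 has a genuine gap. The concavity step is fine for $p\ge 2$, but it yields exactly two facts: (i) $Df^2(x)\le -2c/p$ on $A^c$, and (ii) $Df^{2q}(x)\le -c_q f^{2q-2}(x)$ on $A^c$ for $2q<p$. Fact (ii) is not progress --- it is literally the hypothesis of the theorem with $p$ replaced by $2q$, so the ``Lamperti-type input'' you invoke to convert it into $\BbE_x[\tau_A^q]<\infty$ is the theorem you are trying to prove. Fact (i), fed into the supermartingale $f^2(Z_{n\wedge\tau_A})+\tfrac{2c}{p}(n\wedge\tau_A)$, gives only $\BbE_x[\tau_A]\le\tfrac{p}{2c}f^2(x)$ and hence, by Markov, $\BbP_x(\tau_A>n)=O(n^{-1})$ --- not the claimed $O(n^{-p/2})$, and good only for moments of order $q<1$. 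The entire content of the cited Theorem~1 is precisely the step you skip: one must show that $\bigl(f^2(Z_{n\wedge\tau_A})+\epsilon(n\wedge\tau_A)\bigr)^{q}$ is \emph{itself} a supermartingale for $q<p/2$ and $\epsilon$ small, and verifying this requires expanding the $q$-th power and using the full $p$-th-power drift hypothesis (not merely its second-power consequence) to dominate the cross terms. Finally, the $p<2$ remark does not work as stated: truncating $f$ at level $M$ does nothing to restore the concavity of $y\mapsto y^{2/p}$.

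Part~2 has the correct architecture --- an essentially linear high-probability upper bound on $g$ (hence on $f$, via (a)) coming from (b) and (c), a submartingale lower bound for $f^p$ before $\tau$ coming from (d), and a comparison of the two over $\{\tau\le n\}$ and $\{\tau>n\}$ --- and this is indeed how such non-existence results are proved. The one step you should not wave away is the exceptional event: from $f^p(x)\le(ab)^p+\BbE_x\bigl[f^p(Z_n)\id_{\{\tau>n\}}\bigr]$ you need the contribution of the event $\{g(Z_n)>C(g(x)+n)\}$ to be negligible, and a bound on its \emph{probability} alone ($O(n^{1-r})$) does not control the \emph{expectation} of $f^p(Z_n)$ there; one needs a maximal or moment inequality for $g^r$ combined with H\"older, which is exactly where the hypotheses $r>1$ and the relation between $p$ and $r$ are consumed.
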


\begin{nota}
For $h:\BbR^+\rightarrow\BbR^+$, $\rho\in\BbR$,  we write $h(x)\asymp x^\rho$, if $\lim_{x\rightarrow \infty}h(x)x^{-\rho}=1$ and $h(x) \Yleft x^\rho$, if there exist a function $h_1$ such that $h(x)\leq h_1(x)$ and $h_1(x)\asymp
x^\rho$.
\end{nota}

\subsection{Proof of the theorems \ref{thm:precise-right} and \ref{thm:precise-left}}
The main theorems are stated under the condition that the reference measure  $\lambda$ is the Lebesgue measure on $\BbR$ (or on $\BbR_+$). To simplify notation, we write $\lambda(dy)=dy$ for Lebesgue measure. The case of 
$\mu$ having a density with respect to the counting measure on $\BbZ$ requires a small technical additional step as will be explained in the remark \ref{rem:sum-integral} below.

In the sequel, we shall use a Lyapunov function, $g$,  depending on a parameter $\delta\ne0$, reading 
\[g(x)=\left\{\begin{array}{ll}
x^\delta, & x\geq 1\\
1, &x<1
\end{array}\right. \ (\textrm{if }\ \delta<0)
\]
 and 
 \[g(x)=x^\delta \ (\textrm{if }\ \delta>0). 
\]
in general the choice $\delta>0$ is made to prove recurrence and $\delta<0$ to prove transience. The range of values of $\delta$ will be determined from the specific context as explained below.
\begin{lemm}
\label{lem:Dg}
Let $(\zeta_n)$ be the Markov chain of the theorem \ref{thm:precise-right} and
suppose that $x$ is very large.  
For arbitrary $y_0\geq 1$ and $\delta<\theta$, 
\[Dg(x)\Yleft (x-x^\gamma)^\delta \biggl[\int_{[y_0,\infty[}\left(\Big(1+ \frac{y}{x-x^\gamma}\Big)^\delta-1\right) m(y)dy -\delta\frac{x^\gamma}{x-x^\gamma} \biggr].\]\

\end{lemm}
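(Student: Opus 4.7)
The plan is to compute $Dg(x)$ directly from the recursion and to peel off the two contributions displayed on the right-hand side of the lemma, showing the residual terms are of strictly smaller order and so absorbed by the $\Yleft$ slack.

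Setting $u := x - x^\gamma$, I would first observe that for $x$ sufficiently large $u \geq 1$, so for every $y \in \BbR_+$ we have $u + y \geq 1$ and $g(u+y) = (u+y)^\delta$; in particular, neither the piecewise cap at $1$ in the definition of $g$ (used when $\delta < 0$) nor the positive-part truncation in the recursion is active. Hence
\[
Dg(x) \;=\; \int_0^\infty (u+y)^\delta m(y)\,dy - x^\delta \;=\; (u^\delta - x^\delta) \,+\, u^\delta\!\int_0^\infty\!\bigl((1+y/u)^\delta-1\bigr)m(y)\,dy,
\]
the second equality being obtained by adding and subtracting $u^\delta = u^\delta\int m$ and factoring $u^\delta$ out of the integrand.

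For the first summand I would write $u^\delta - x^\delta = -u^\delta\bigl[(1+x^\gamma/u)^\delta - 1\bigr]$ and Taylor expand in $t = x^\gamma/u$, which tends to $0$ as $x\to\infty$ because $\gamma<1$; the leading term $-\delta x^\gamma u^{\delta-1}$ is exactly $-\delta\bigl(x^\gamma/(x-x^\gamma)\bigr)(x-x^\gamma)^\delta$, i.e.\ the second bracketed quantity of the lemma, and the Taylor remainder is $O(x^{2\gamma}u^{\delta-2})$. For the integral, I would split at $y_0$: the tail $\int_{y_0}^\infty$ reproduces verbatim the first bracketed quantity of the lemma, and the head would be controlled by a one-sided inequality whose form depends on the sign of $\delta$; concavity of $t\mapsto(1+t)^\delta$ on $\delta\in(0,1)$ gives $(1+y/u)^\delta-1 \leq \delta y/u$, while the trivial $(1+y/u)^\delta - 1 \leq 0$ suffices for $\delta \leq 0$, and in either case the head contribution is at most $C(y_0)\,u^{\delta-1}$.

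The final step is to check that the Taylor remainder $O(x^{2\gamma}u^{\delta-2})$ and the head-integral error $O(u^{\delta-1})$ are asymptotically negligible compared with the two displayed terms. A change of variable $s = y/u$ with the tail form of $m$ shows that the tail integral is of order $u^{-\theta}$, so after the $u^\delta$ factor the first displayed term is of order $u^{\delta-\theta}$ (this is where the hypothesis $\delta<\theta$ enters, to guarantee convergence at infinity of $\int_0^\infty((1+s)^\delta-1)s^{-1-\theta}\,ds$), while the second displayed term is of order $u^{\delta+\gamma-1}$. Both dominate $u^{\delta-1}$ since $\theta<1$ and $\gamma>0$, and they dominate $u^{\delta+2\gamma-2}$ as well. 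Hence the upper bound is asymptotic to the right-hand side, giving the $\Yleft$ conclusion. The technical nuisance I expect to require most care is precisely the sign of $\delta$: the statement allows $\delta<\theta$ of either sign, forcing two different one-sided bounds for $(1+t)^\delta - 1$, and one must verify that the sign-sensitive residues (Taylor remainder, head of the integral) never grow as fast as the two displayed terms.
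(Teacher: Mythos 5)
Your proposal is correct and follows essentially the same route as the paper: factor out $(x-x^\gamma)^\delta$, expand $x^\delta=(x-x^\gamma)^\delta\bigl(1+\frac{x^\gamma}{x-x^\gamma}\bigr)^\delta$ to first order, split the integral at $y_0$, and bound the head (by sign-dependent one-sided estimates) by a term of order $u^{\delta-1}$ that is negligible against the drift term $u^{\delta+\gamma-1}$ since $\gamma>0$. Your explicit tracking of the Taylor remainder and of where $\delta<\theta$ is needed is slightly more careful than the paper's $\asymp$ shorthand, but it is the same argument.
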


\begin{proof}
Assume everywhere in the sequel that $x$ is very large. The parameter $\delta$ is allowed to be positive or negative.
\begin{align*}
Dg(x) &= \int_{\BbR^+} [(x-x^\gamma +y)^\delta- x^\delta] m(y)dy\\
\cut{&= (x-x^\gamma)^\delta \int_{\BbR^+} \left[\left(1 +\frac{y}{x-x^\gamma}\right)^\delta- \left(\frac{x}{x-x^\gamma}\right)^\delta\right] m(y)dy\\ }
&= (x-x^\gamma)^\delta \int_{\BbR^+} \left [\left(1 +\frac{y}{x-x^\gamma}\right)^\delta- \left(1+\frac{x^\gamma}{x-x^\gamma}\right)^\delta\right] m(y)dy\\
&\asymp (x-x^\gamma)^\delta \left[\int_{\BbR^+} \left(1 +\frac{y}{x-x^\gamma}\right)^\delta m(y)dy- 1-\delta\frac{x^\gamma}{x-x^\gamma}\right].
\end{align*}
For arbitrary $y_0\in\BbR^+$, the integral $\int_{\BbR^+}$ in the previous formula can be split into $\int_{]0,y_0[} +\int_{[y_0,\infty[}$. In the sequel we shall consider only the case $x\gg y_0$.  If $\delta<0$ then the function 
$y\mapsto (1+ \frac{y}{x-x^\gamma})^\delta$ is decreasing, hence $\sup_{y\in ]0,y_0[} (1+ \frac{y}{x-x^\gamma})^\delta\leq 1$. On the contrary, when $\delta>0$, the corresponding function is increasing and we have
$\sup_{y\in]0, y_0[} (1+ \frac{y}{x-x^\gamma})^\delta\leq (1+ \frac{y_0}{x-x^\gamma})^\delta\asymp 1+\delta \frac{y_0}{x-x^\gamma}$. In any situation, 
\[\int_{]0,y_0[}\Big(1+ \frac{y}{x-x^\gamma}\Big)^\delta m(y)dy\Yleft \mu(]0,y_0[)+ |\delta| \frac{y_0}{x-x^\gamma}.\] 
The remaining integral can be written as
\[\int_{[y_0,\infty[}\Big(1+ \frac{y}{x-x^\gamma}\Big)^\delta m(y)dy=\int_{[y_0,\infty[}\left[\Big(1+ \frac{y}{x-x^\gamma}\Big)^\delta-1\right] m(y)dy+\mu([y_0,\infty[).\]
Replacing these expressions into the formula for $Dg(x)$ yields
\[Dg(x){\color{red}\Yleft} (x-x^\gamma)^\delta \biggl[\int_{[y_0,\infty[}\left(\Big(1+ \frac{y}{x-x^\gamma}\Big)^\delta-1\right) m(y)dy -\delta\frac{x^\gamma}{x-x^\gamma}\biggr],\]
because, for $x$ sufficiently large, $\frac{y_0}{x-x^\gamma}$ is negligible compared to $\frac{x^\gamma}{x-x^\gamma}$.
\end{proof}

\begin{remn}
Note that since $0<\gamma<1$, the asymptotic majorisation
$Dg(x)\Yleft x^\delta \biggl[\int_{[y_0,\infty[}\left(\Big(1+ \frac{y}{x-x^\gamma}\Big)^\delta-1\right) m(y)dy -\delta\frac{x^\gamma}{x-x^\gamma}\biggr]$ is \textit{equivalent} to the one established in lemma \ref{lem:Dg}.
\end{remn}

\begin{lemm}
\label{lem:Dg-B}

Let $\delta<\theta<1$. Suppose further that there exist constants $0<b_1\leq b_2<\infty$ such that for all $y\geq y_0$, for some $y_0>0$, we have 
$b_1\leq c_y\leq b_2$. 
Then,   the integral
\[I(x):= \int_{[y_0,\infty[}\left(\Big(1+ \frac{y}{x-x^\gamma}\Big)^\delta-1\right) m(y)dy,\]
asymptotically for large $x$, satisfies 

 \[\delta B_1K_{\delta,\theta} x^{-\theta}\Yleft I(x)\Yleft \delta B_2K_{\delta,\theta} x^{-\theta},\]
  where $K_{\delta,\theta}=\frac{\Gamma(1-\theta)\Gamma(\theta-\delta)}{\theta \Gamma(1-\delta)}$,
 $(B_1,B_2)=(b_1, b_2)$ if $\delta>0$, and $(B_1, B_2)= (b_2, b_1)$ when $\delta<0$.
\end{lemm}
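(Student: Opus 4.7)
The plan is to reduce $I(x)$ to a single universal integral by rescaling. Setting $u = x - x^\gamma$ and changing variables via $t = y/u$, the integral becomes
\[
I(x) = u^{-\theta} \int_{y_0/u}^\infty \left((1+t)^\delta - 1\right) c_{tu}\, t^{-1-\theta}\, dt.
\]
Since $u \asymp x$ as $x\to\infty$, one has $u^{-\theta}\asymp x^{-\theta}$, and the lower limit $y_0/u$ tends to $0$. The hypothesis $b_1\le c_y\le b_2$ lets me bracket $c_{tu}$ between $b_1$ and $b_2$ uniformly in $t\ge y_0/u$. Because the factor $(1+t)^\delta-1$ has the sign of $\delta$, the upper/lower bounds must swap $b_1$ and $b_2$ according to the sign of $\delta$, which is exactly what is encoded by $(B_1,B_2)$ in the statement.

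The heart of the proof is then to compute
\[
J_{\delta,\theta}:=\int_0^\infty \left((1+t)^\delta-1\right) t^{-1-\theta}\,dt
\]
and to identify it with $\delta K_{\delta,\theta}$. The integral converges, since $(1+t)^\delta-1\sim \delta t$ as $t\to 0^+$ (integrand $\sim \delta t^{-\theta}$, integrable because $\theta<1$) and $(1+t)^\delta-1\sim t^\delta$ as $t\to\infty$ (integrand $\sim t^{\delta-1-\theta}$, integrable because $\delta<\theta$). A single integration by parts with $dv=t^{-1-\theta}\,dt$ makes both boundary terms vanish precisely under these conditions and yields
\[
J_{\delta,\theta}=\frac{\delta}{\theta}\int_0^\infty (1+t)^{\delta-1} t^{-\theta}\,dt.
\]
The remaining integral is the classical Beta integral $B(1-\theta,\theta-\delta)=\Gamma(1-\theta)\Gamma(\theta-\delta)/\Gamma(1-\delta)$, giving $J_{\delta,\theta}=\delta K_{\delta,\theta}$.

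Finally, to pass from $J_{\delta,\theta}$ back to $I(x)$ one has to check that the missing piece over $[0,y_0/u]$ is of lower order than $x^{-\theta}$. This is routine: since $(1+t)^\delta-1=O(t)$ near $0$, the truncation error on $[0,y_0/u]$ is of order $(y_0/u)^{1-\theta}=o(1)$. Combined with the two-sided bound on $c_{tu}$, this yields the announced two-sided asymptotic. The only genuine bookkeeping obstacle is carefully tracking the direction of inequalities as a function of $\textrm{sign}(\delta)$, so that $(B_1,B_2)$ appears with the orientation stated in the lemma; the analytical content is just the Beta-function computation.
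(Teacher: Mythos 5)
Your proof is correct and follows essentially the same route as the paper's: bracket $c_y$ between $b_1$ and $b_2$ (swapping according to the sign of the integrand, hence of $\delta$), rescale by $x-x^\gamma$, and evaluate $\int_0^\infty((1+t)^\delta-1)t^{-1-\theta}dt=\delta K_{\delta,\theta}$ by the same integration by parts and Beta-integral identity, with the same convergence conditions $\theta<1$ and $\delta<\theta$ killing the boundary terms. The only cosmetic difference is that the paper extracts the constants before changing variables, whereas you change variables first; the content is identical.
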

\begin{proof}
Write
\begin{eqnarray*}
I(x):= \int_{[y_0,\infty[}\left(\Big(1+ \frac{y}{x-x^\gamma}\Big)^\delta-1\right) m(y)dy
= \int_{[y_0,\infty[}c_y\frac{(1+ \frac{y}{x-x^\gamma})^\delta-1}{ y^{1+\theta}}dy.
\end{eqnarray*}
Consider first $\delta>0$; in this case  the integrand is positive, hence 
\[b_1I_1(x)
\leq I(x) \leq
b_2I_1(x),\]
where $I_1(x):=\int_{[y_0,\infty[} \frac{\left(1+ \frac{y}{x-x^\gamma}\right)^\delta-1}{y^{1+\theta}}dy$. 
We estimate then, for fixed $y_0$ and large $x$ (so, $y_0$ is small compared to $x$) and performing the change of variable $u=\frac{y}{x-x^\gamma}$, 
\begin{align*}
I_1(x)&:=   \int_{[y_0,\infty[}\frac{(1+ \frac{y}{x-x^\gamma})^\delta-1}{ y^{1+\theta}}dy  \\
&= (x-x^\gamma)^{-\theta}\int_{\frac{y_0}{x-x^\gamma}} \frac{(1+u)^\delta-1}{u^{1+\theta}} du\Yleft x^{-\theta}\int_0^\infty \frac{(1+u)^\delta-1}{u^{1+\theta}} du 
\end{align*}
Now for $\delta<\theta>1$ (recall that $\theta>0)$ 
\begin{align*}
\int_0^\infty \frac{(1+u)^\delta-1}{u^{1+\theta}} du &=  -\frac{1}{\theta} \int_{u=0}^{u=\infty} [(1+u)^\delta -1]d(u^{-\theta})\\
&= \Big[\frac{1}{\theta}\frac{(1+u)^\delta -1}{u^{\theta}}\Big]_{u=0}^{u=\infty}
+\frac{\delta}{\theta} \int_0^\infty \frac{(1+u)^{\delta-1}}{u^\theta} du\\
&= 0 +\frac{\delta}{\theta}  \frac{\Gamma(1-\theta) \Gamma(\theta-\delta)}{\Gamma(1-\delta)} = \delta K_{\delta,\theta}.
\end{align*}
%For $\delta<\theta<1$ (recall that $\theta>0$), $\frac{1}{\theta}\frac{(1+u)^\delta -1}{u^{\theta}}\vert_{u=0}^{u=\infty}=0$. As for the remaining integral, it can be transformed into the integral representation of the  $\beta$ function 
%\[\beta(a,b)=\int_0^1 t^{a-1} (1-t)^{b-1} dt= \frac{\Gamma(a)\Gamma(b)}{\Gamma(a+b-1)},  \ \ \Re a>0, \Re b>0\]
%just by performing the change of variable $t=\frac{u}{1+u}$. 
The claimed majorisation $I_1(x)\Yleft x^{-\theta}\delta K_{\delta,\theta}$ is obtained immediately.
The minoration is obtained similarly. 
If $\delta<0$, the integrand is negative, hence the role of $b_1$ and $b_2$ must be interchanged. 
\end{proof}

\begin{lemm}
\label{lem:Dg-c}
Let $\delta<\theta<1$. Suppose further that $c_y\rightarrow c$. Then for all $\eps >0$, there exists a $y_0$ such that 
\[I(x):= \int_{[y_0,\infty[}\left((1+ \frac{y}{x-x^\gamma})^\delta-1\right) m(y)dy= c\delta K_{\delta, \theta} (x-x^\gamma)^{-\theta}(1+\eps\cO(1)),\]
where $K_{\delta,\theta}=\frac{\Gamma(1-\theta)\Gamma(\theta-\delta)}{\theta\Gamma(1-\delta)}$.
\end{lemm}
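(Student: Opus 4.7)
The plan is to refine the bounds of Lemma \ref{lem:Dg-B} by exploiting the sharper hypothesis $c_y\to c$. Given any $\eps>0$, the convergence $c_y\to c$ allows one to choose $y_0$ large enough that $c-\eps\leq c_y\leq c+\eps$ for every $y\geq y_0$. Substituting this sandwich into
\[I(x) = \int_{[y_0,\infty[} c_y\,\frac{(1+\frac{y}{x-x^\gamma})^\delta - 1}{y^{1+\theta}}\, dy\]
and noting that the factor $(1+\frac{y}{x-x^\gamma})^\delta-1$ has a constant sign on the domain of integration (positive if $\delta>0$, negative if $\delta<0$), one obtains a two-sided bound of the form $(c\mp\eps)J(x)\leq I(x)\leq (c\pm\eps)J(x)$, where $J(x)$ is the integral appearing in Lemma \ref{lem:Dg-B} with $c_y\equiv 1$.

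Next, I would reproduce verbatim the computation carried out in the proof of Lemma \ref{lem:Dg-B}: the change of variable $u=y/(x-x^\gamma)$ yields
\[J(x) = (x-x^\gamma)^{-\theta}\int_{y_0/(x-x^\gamma)}^\infty\frac{(1+u)^\delta-1}{u^{1+\theta}}\,du.\]
Since $y_0$ is fixed and $x\to\infty$, the lower limit tends to $0$, so the integral tends to $\int_0^\infty\frac{(1+u)^\delta-1}{u^{1+\theta}}du$, evaluated by the integration by parts of Lemma \ref{lem:Dg-B} to $\delta K_{\delta,\theta}$. In particular, $J(x)\asymp\delta K_{\delta,\theta}(x-x^\gamma)^{-\theta}$.

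Combining these two steps gives
\[(c-\eps)\delta K_{\delta,\theta}(x-x^\gamma)^{-\theta}(1+o(1))\;\leq\; I(x)\;\leq\;(c+\eps)\delta K_{\delta,\theta}(x-x^\gamma)^{-\theta}(1+o(1))\]
for $\delta>0$, with the inequalities flipped but the enveloping constants unchanged for $\delta<0$. This is exactly the claim that $I(x)=c\delta K_{\delta,\theta}(x-x^\gamma)^{-\theta}(1+\eps\,\cO(1))$.

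The proof is essentially a uniformity refinement of the previous lemma, so I do not expect any genuine obstacle. The one point to treat with care is the order of quantifiers: $y_0$ depends on $\eps$ but not on $x$, and the $o(1)$ term produced by the convergence of the truncated integral to its value over $[0,\infty[$ is controlled solely by $x\to\infty$ with $y_0$ fixed; these two error contributions must be kept separate in order to arrive cleanly at the announced $(1+\eps\cO(1))$ factor.
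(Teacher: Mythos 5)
Your proposal is correct and follows essentially the same route as the paper: the paper writes $I(x)=cI_1(x)+\int_{[y_0,\infty[}(c_y-c)\frac{(1+y/(x-x^\gamma))^\delta-1}{y^{1+\theta}}dy$ and bounds the second term by $\eps I_1(x)$ using $|c_y-c|\leq\eps$ for $y\geq y_0$, which is exactly your sandwich $(c\mp\eps)J(x)\leq I(x)\leq(c\pm\eps)J(x)$ combined with the evaluation $I_1(x)\Yleft \delta K_{\delta,\theta}(x-x^\gamma)^{-\theta}$ from Lemma \ref{lem:Dg-B}. Your explicit attention to the order of quantifiers ($y_0$ fixed by $\eps$ before letting $x\to\infty$) is a point the paper leaves implicit.
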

\begin{proof}
Observe that $I(x)=cI_1(x) + \int_{[y_0,\infty[}(c_y-c)\frac{(1+ \frac{y}{x-x^\gamma})^\delta-1}{ y^{1+\theta}}dy$.
Now, since $c_y\rightarrow c$, it follows that for all $\eps >0$ one can choose $y_0$ such that for $y\geq y_0$, we have 
$|c_y-c|\leq \eps$. We then immediately conclude that the absolute value of the above integral is majorised by 
$\eps I_1(x)$. 
\end{proof}

\begin{lemm}
\label{lem:Kzero}
\begin{enumerate}
\item Let $\theta\in]0,1[$ and $c>0$. For all $\delta\in]-\infty, \theta[$ let $K_{\delta,\theta}=\frac{1}{\theta}
\frac{\Gamma(1-\theta)\Gamma(\theta-\delta)}{\Gamma(1-\delta)}$.
If $c\pi \csc(\pi \theta) <\theta$ then there exists a unique $\delta_0:=\delta_0(c,\theta)\in ]0, \theta[$ such that 
$cK_{\delta_0,\theta} -1 =0$.
\item
For $\theta\in]0,1[$, $c>0$,  and $\delta\in]-\infty, \theta[$, define $ L_{\delta,\theta}= \frac{\Gamma(1+\delta)\Gamma(-\theta)}{\Gamma(1-\theta-\delta)}$. Then for every fixed $\theta\in]0,1[$, $L_{\delta,\theta}+\frac{1}{\theta}<0$ for all $\delta>0$ and
there exists a unique $\delta_0:=\delta_0(c,\theta)\in ]0,\infty[$ such that $cL_{\delta_0,\theta}+\delta_0=0$.
\end{enumerate}
\end{lemm}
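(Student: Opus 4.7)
\textit{Part (1).} The plan is to show that $\delta\mapsto cK_{\delta,\theta}$ is continuous and strictly increasing on $(0,\theta)$, lies strictly below $1$ at $\delta=0$, and diverges to $+\infty$ as $\delta\to\theta^-$; the intermediate value theorem then produces a unique crossing $\delta_0\in(0,\theta)$ of the level $1$. First I would compute the value at the origin via the Euler reflection identity $\Gamma(\theta)\Gamma(1-\theta)=\pi\csc(\pi\theta)$, giving $cK_{0,\theta}=c\pi\csc(\pi\theta)/\theta<1$ by the standing hypothesis. The divergence as $\delta\to\theta^-$ is just the simple pole of $\Gamma(\theta-\delta)$ at zero. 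Strict monotonicity follows at once from
\[
\frac{d}{d\delta}\log K_{\delta,\theta}=\psi(1-\delta)-\psi(\theta-\delta)>0,
\]
which is positive because the digamma function $\psi$ is strictly increasing on $(0,\infty)$ and $1-\delta>\theta-\delta>0$ throughout the interval.

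\textit{Part (2).} Applying $\Gamma(1-\theta)=-\theta\,\Gamma(-\theta)$ at $\delta=0$ yields $L_{0,\theta}=-1/\theta$. For $\delta>0$, counting signs gives $L_{\delta,\theta}<0$ (only $\Gamma(-\theta)$ is negative among the three $\Gamma$-factors), and the log-derivative
\[
\frac{d}{d\delta}\log|L_{\delta,\theta}|=\psi(1+\delta)-\psi(1-\theta+\delta)>0
\]
shows $|L_{\delta,\theta}|$ is strictly increasing; hence $L_{\delta,\theta}$ is strictly decreasing from $L_{0,\theta}=-1/\theta$, and the inequality $L_{\delta,\theta}+1/\theta<0$ for $\delta>0$ is then immediate.

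For the root $\delta_0$ of $cL_{\delta,\theta}+\delta=0$, the key move is a reparametrisation that converts the affine equation into a level-set problem for a monotone ratio of Gamma functions. Using $\Gamma(1+\delta)=\delta\,\Gamma(\delta)$, I would recast the equation as $\phi(\delta)=1$, with
\[
\phi(\delta):=-\frac{c\,L_{\delta,\theta}}{\delta}=c\,|\Gamma(-\theta)|\,\frac{\Gamma(\delta)}{\Gamma(\delta+1-\theta)}.
\]
The log-derivative $\psi(\delta)-\psi(\delta+1-\theta)<0$ shows $\phi$ is strictly decreasing on $(0,\infty)$; the boundary behaviour is $\phi(\delta)\to+\infty$ as $\delta\to 0^+$ (from the pole of $\Gamma$ at $0$) and $\phi(\delta)\to 0$ as $\delta\to\infty$ (via the asymptotic $\Gamma(\delta)/\Gamma(\delta+1-\theta)\sim\delta^{-(1-\theta)}$, which vanishes because $1-\theta>0$). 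The intermediate value theorem then yields the unique $\delta_0\in(0,\infty)$ with $\phi(\delta_0)=1$. I expect the only nontrivial step to be spotting this clean reparametrisation; once $\phi$ is identified as a Gamma-ratio, everything reduces to monotonicity of $\psi$ and continuity, and there is no serious obstacle to foresee.
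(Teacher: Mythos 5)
Your proof is correct, and it reaches the same endpoints as the paper (value at $\delta=0$ via the reflection formula, monotonicity, boundary behaviour, intermediate value theorem) by a genuinely different technical route. The paper derives monotonicity and continuity from the integral (Beta-function) representations $\delta K_{\delta,\theta}=\int_0^\infty\frac{(1+u)^\delta-1}{u^{1+\theta}}\,du$ and $L_{\delta,\theta}+\frac{1}{\theta}=\int_0^1\frac{(1-u)^\delta-1}{u^{1+\theta}}\,du$, whereas you differentiate $\log$ of the Gamma-ratios and invoke monotonicity of the digamma function $\Gamma'/\Gamma$; both are legitimate, but yours shows directly that $K_{\delta,\theta}$ itself (not merely $\delta K_{\delta,\theta}$) is strictly increasing, which is what uniqueness in part (1) actually requires. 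In part (2) your reparametrisation $\phi(\delta)=-cL_{\delta,\theta}/\delta=c|\Gamma(-\theta)|\,\Gamma(\delta)/\Gamma(\delta+1-\theta)$ is a real improvement: the paper obtains existence of $\delta_0$ from the sublinear asymptotics $L_{\delta,\theta}\asymp\Gamma(-\theta)\delta^\theta$ together with $L_{0,\theta}=-1/\theta$, but since $\delta\mapsto cL_{\delta,\theta}+\delta$ is the sum of a decreasing and an increasing function, strict monotonicity of $L$ alone does not settle uniqueness, while your strictly decreasing $\phi$ does so cleanly. One small remark: you have silently used the denominator $\Gamma(1-\theta+\delta)$ in $L_{\delta,\theta}$, which is the correct normalisation (it matches the integral representation and the statement of the theorem on the left-moving chain); the lemma as printed carries a sign typo ($\Gamma(1-\theta-\delta)$), so it is worth flagging that your computation resolves the discrepancy in favour of the $+\delta$ version.
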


\begin{proof}
\begin{enumerate}
\item
By standard results\footnote{We used the identity $\Gamma(z)\Gamma(1-z)= \pi\csc(\pi z)$, (see \cite[formula \S1.2 (6)]{ErdelyiMagnusOberhettingerTricomi1981a} for instance).}  on $\Gamma$ functions, $K(0,\theta)= \frac{1}{\theta} \csc (\pi \theta)$. For fixed $\theta$, the function $K_{\delta,\theta}$ is strictly increasing  and continuous in $\delta$ --- as follows from its integral representation ---and $\lim_{\delta\uparrow \theta} K_{\delta, \theta} = \infty$, from which follows the existence of $\delta_0\in]0,\theta[$ verifying the claimed equality.  
\item
From \cite[\S1.2, formul\ae\ (4) and (1)]{ErdelyiMagnusOberhettingerTricomi1981a} follow immediately that   $L_{0, \theta}=-\frac{1}{\theta}<0$ and 
$\lim_{\delta\to\infty} \frac{L_{\delta,\theta}}{\Gamma(-\theta) \delta^\theta}=1$. The strict monotonicity and continuity (in $\delta$) of the  function $L_{\delta,\theta}$ follows from its integral representation: $L_{\delta,\theta}+\frac{1}{\theta} =\int_0^1\frac{\left((1-u)^\delta-1\right)}{u^{1+\theta}}du$. 
Hence $L_{\delta,\theta} \asymp \Gamma(-\theta) \delta^\theta$ for large $\delta$. Since the asymptotic behaviour of $L_{\delta,\theta}$ is negative and sublinear in $\delta$, it follows that there exists a sufficiently large $\delta_0$ for which the claimed equality holds. Additionally, the strict monotonicity of $L_{\delta,\theta}$ combined with the fact that $L_{0, \theta}=-\frac{1}{\theta}<0$ guarantees that $L_{\delta,\theta}+\frac{1}{\theta}<0$ for all $\delta>0$.
\end{enumerate}
\end{proof}

\begin{lemm}
\label{lem:Tdynsys}
Let $f:\BbR_+\to \BbR_+$ be a given function;  define the dynamical system $(X_t)_{t\in\BbN}$ by
$X_0=x_0$ and recursively $X_{t+1}=f(X_t)$, for $t\in\BbN$. For $a>0$ define $T_{[0,a]}(x_0)=\inf\{t\geq 1: X_t \leq a\}$.
\begin{enumerate}
\item If $f(x)=x-x^\gamma$ for some $\gamma\in ]0, 1[$ and  $x_0\gg 1$, then $T_{[0,a]}(x_0)\asymp \frac{x_0^{1-\gamma}}{1-\gamma}$. 
 \item If $f(x)=x-x^\gamma+1$ for some $\gamma\in ]0, 1[$,  $a>1$, and $x_0\gg a$, then
 $T_{[0,a]}(x_0)\asymp \frac{x_0^{1-\gamma}}{1-\gamma}$. 
\end{enumerate}
\end{lemm}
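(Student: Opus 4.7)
\medskip
\noindent\textbf{Proof proposal for Lemma \ref{lem:Tdynsys}.}
The plan is to identify a natural Lyapunov-type function $\phi$ along whose level lines the dynamical system decreases by approximately one unit per time step, and then to conclude by telescoping. The ODE analogue $\dot x=-x^\gamma$ (respectively $\dot x=-x^\gamma+1$), obtained by passing from the discrete recursion to its continuous-time interpolation, suggests the choice
\[
\phi(x)=\frac{x^{1-\gamma}}{1-\gamma},\qquad x\geq 0,
\]
since $\phi'(x)=x^{-\gamma}$ turns the ODE into $\frac{d}{dt}\phi(x(t))=-1$. The goal then reduces to showing that the discrete one-step increment $\Delta\phi(x):=\phi(f(x))-\phi(x)$ is $-1+o(1)$ as $x\to\infty$, and to translating this into the asymptotics of $T_{[0,a]}(x_0)$.

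First, I would carry out the Taylor expansion step, which is the key technical computation. For case~(1), writing $f(x)=x(1-x^{\gamma-1})$ and expanding $(1-u)^{1-\gamma}=1-(1-\gamma)u-\tfrac{(1-\gamma)\gamma}{2}u^{2}+O(u^{3})$ with $u=x^{\gamma-1}\to 0$, one obtains
\[
\Delta\phi(x)=\frac{x^{1-\gamma}}{1-\gamma}\bigl[(1-x^{\gamma-1})^{1-\gamma}-1\bigr]=-1-\frac{\gamma}{2}x^{\gamma-1}+O(x^{2(\gamma-1)})=-1+o(1).
\]
For case~(2), since $\phi$ is $\cC^1$ with $\phi'(y)=y^{-\gamma}$, an extra Taylor step gives $\phi(x-x^\gamma+1)-\phi(x-x^\gamma)=(x-x^\gamma)^{-\gamma}(1+o(1))=O(x^{-\gamma})=o(1)$, which is absorbed into the previous error, so $\Delta\phi(x)=-1+o(1)$ in this case as well.

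Next, fix $\eps\in\,]0,1[$ and, using the previous step, pick $M=M(\eps)>a$ such that $|\Delta\phi(x)+1|\leq\eps$ for every $x\geq M$. Note that in both cases $f(x)<x$ whenever $x>1$ (which is why the hypothesis $a>1$ enters case~(2) to ensure that the fixed point of $f$ lies inside $[0,a]$); in particular $(X_t)$ is strictly decreasing as long as it stays above $a$, and the hitting time $T_{[0,a]}(x_0)$ is finite. Let $T_M=\min\{t\geq 0:X_t\leq M\}$. For $t<T_M$ we have $X_t\geq M$, hence telescoping yields
\[
(1-\eps)T_M\;\leq\;\phi(x_0)-\phi(X_{T_M})\;\leq\;(1+\eps)T_M,
\]
with $0\leq\phi(X_{T_M})\leq\phi(M)=O_{\eps}(1)$. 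Dividing by $\phi(x_0)$ and sending $x_0\to\infty$ gives $T_M/\phi(x_0)\in[(1+\eps)^{-1},(1-\eps)^{-1}]$ in the limit. Finally, $f$ is continuous on the compact set $[0,M]$ and strictly contractive towards $[0,a]$ (since $f(x)<x$ on $\,]1,M]\supset\,]a,M]$ and $f([0,a])\subset[0,\max\{a,f(M)\}]$ in the appropriate sense), so the number of additional steps $T_{[0,a]}(x_0)-T_M$ is uniformly bounded by a constant $C(a,M)$ independent of $x_0$. Since $\eps$ is arbitrary, this gives $T_{[0,a]}(x_0)/\phi(x_0)\to 1$, which is the stated conclusion.

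The main obstacle is the Taylor expansion of step~one, since one must retain enough terms to verify that the leading $-1$ is genuinely the first non-vanishing contribution and that the remainder is $o(1)$; everything else is a standard comparison with the ODE plus a short argument showing that the bounded phase $[a,M]$ is traversed in $O(1)$ steps, which is negligible compared to the divergent quantity $\phi(x_0)=x_0^{1-\gamma}/(1-\gamma)$.
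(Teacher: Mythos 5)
Your proof is correct, and it rests on the same underlying idea as the paper's — comparison of the recursion with the ODE $\dot x=-x^{\gamma}$, i.e.\ the observation that $\int x^{-\gamma}\,dx=\frac{x^{1-\gamma}}{1-\gamma}$ measures the travel time. The execution differs in a way worth noting. The paper argues at the continuum level: it replaces the difference $\Delta X_t=-X_t^{\gamma}$ (resp.\ $-X_t^{\gamma}+1$) by the differential $dX_t$ and integrates directly, invoking in case (2) an explicit evaluation via the hypergeometric function; the passage from the difference equation to the differential equation is left as a heuristic approximation. You instead make this discretization rigorous: you take $\phi(x)=x^{1-\gamma}/(1-\gamma)$ as a Lyapunov function, verify by Taylor expansion that the one-step increment is $-1+o(1)$ (correctly checking that the next-order terms $O(x^{\gamma-1})$ and, in case (2), $O(x^{-\gamma})$ are negligible), telescope down to a level $M(\eps)$, and dispose of the bounded stretch $[a,M]$ by the uniform lower bound on the per-step decrement (which is exactly where the hypothesis $a>1$ is needed in case (2), as you point out). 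Your route buys a genuinely complete argument with explicit error control and no special functions, at the cost of a slightly longer write-up; the paper's route is shorter and yields, as a by-product, the exact constant for the subleading term, but it does not justify the continuum approximation. Either proof establishes the lemma as stated.
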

\begin{proof}
\begin{enumerate}
\item
The derivative of the function $f$ satisfies $0<f'(x)<1$ for all $x>1$. Therefore, successive iterates 
$f^{\circ n} (x_0)$ eventually reach the interval $[0,1]$ for all $x_0>1$ in a finite number of steps $T_{[0,1]}(x_0)$. To estimate this number, start by approximating, for $X_t=x$ and $1<x<x_0$,  the difference $X_{t+1}-X_t=\Delta X_t =-X_t^\gamma $ by the differential $dX_t =-X_t^\gamma dt$. 
Then
\[T_{[0,a]}(x_0)=\int _0^{T_{[0,a]}(x_0)} dt= -\int_{x_0} ^a X_t^{-\gamma} dX_t=\frac{1}{1-\gamma}(x_0^{1-\gamma}-
a^{1-\gamma})\asymp \frac{x_0^{1-\gamma}}{1-\gamma}.\] 
\item Using the same arguments, and denoting by $F$ the hypergeometric function, we estimate
(see \cite{ErdelyiMagnusOberhettingerTricomi1981a} for instance);
\[T_{[0,a]}(x_0)=\int_{x_0} ^a \frac{dX_t}{1-X^\gamma}= x_0F(1, \frac{1}{\gamma}, 1+\frac{1}{\gamma}, x_0^\gamma)-aF(1, \frac{1}{\gamma}, 1+\frac{1}{\gamma}, a^\gamma)\asymp \frac{1}{1-\gamma} x_0^{1-\gamma}.\]
\end{enumerate}
\end{proof}

\noindent
\textit{Proof of the theorem \ref{thm:precise-right}:}
First we need to prove accessibility of $A=[0,a]$, with $a>1$ from any point $x>a$. Denote by $r:=\mu([0,1])>0$. Since the dynamical system evolving according to the iteration of the function$f(x)=x-x^\gamma+1$ reaches $A$ in finite time $T_A(x)$, as proven in lemma \ref{lem:Tdynsys}, the Markov chain can reach $A$ in time $\tau_A$ verifying 
$\BbP_x(\tau_A\leq T_A+1)\geq Cr^{T_A(x)}>0$, for all $x\gg a$.

We substitute  the estimates obtained in lemmata \ref{lem:Dg-B} and \ref{lem:Dg-c} into the expression for $Dg$ obtained in lemma \ref{lem:Dg}. 
\begin{enumerate}
\item Assume that $b_1\leq c_y\leq b_2$.
\begin{enumerate}
\item Choose  $0<\delta <\theta $. Then 
\begin{eqnarray*}
Dg(x)&=&(x-x^\gamma)^\delta
\left[
-\delta \frac{x^\gamma}{x-x^\gamma}
+ b_2\delta K_{\delta,\theta} (x-x^\gamma)^{-\theta}+ \cO(x^{-1})\right]
\\
&=&  -\delta x^{\delta+\gamma-1}+ \delta b_2 K_{\delta,\theta} x^{\delta-\theta}+ \cO(x^{\delta-\theta-1}).
\end{eqnarray*}
If $\theta>1-\gamma$, the dominant term reads $-\delta x^{\delta+\gamma-1}$ 
which is negative. Hence, $(g(\zeta_n))$ is a supermartingale tending to infinity if $\zeta_n\to \infty$. We conclude then by  theorem \ref{thm:FMM95}.

\begin{itemize}
\item
To prove finiteness  of moments up to $\theta/(1-\gamma)$, consider $p$ such that $0<p\delta< \theta$. Then 
\[Dg^p(x)\Yleft-\delta p x^{\delta p+\gamma-1} =g(x)^{p-\frac{1-\gamma}{\delta}}\leq -C g(x) ^{p-2},\] provided that $\frac{1}{\delta}<\frac{2}{\gamma-1}$. The latter, combined with the inequality $p\delta <\theta$, establishes the majorisation by $-C g(x)^{p-2}$. This allows to conclude by 
 theorem \ref{thm:AIM96}.
\item To prove the non existence of moments for $q\geq\theta/(1-\gamma)$, denote by $f(x)= x-x^\gamma$.    
Define $Z_0=x$ and recursively $Z_{n+1}=f(Z_n)$ as in lemma \ref{lem:Tdynsys};  similarly the Markov chain can be rewritten $\zeta_0=x$ and recursively $\zeta_{n+1}=f(\zeta_n +\alpha_{n+1})$ as long as $\zeta_n>1$.

Now remark that
$Z_1 = f(x) <f(x+\alpha_1)=\zeta_1$; a simple recursion shows that
$Z_{n+1}  = f^{\circ n} (x+\alpha_1) <\zeta_{n+1}$. 
Obviously $T_{[0,1]}(x+\alpha_1, 0) <\tau_0$. Hence $\tau_0>C(x+\alpha_1) ^{1-\gamma}>C(\alpha_1)^{1-\gamma}$ by  lemma \ref{lem:Tdynsys} and subsequently $\BbE_x(\tau_0^q) \Yright C\BbE(\alpha_1) ^{q(1-\gamma)}=\infty$  whenever $q(1-\gamma)\geq \theta$.
\end{itemize}
\item Choose now $\delta <0 $. Using the same arguments as above, we see that 
the dominant term is $\delta b_1K_{\delta,\theta} x^{\delta-\theta}$ which is again negative. Hence $(g(\zeta_n))$ is a bounded supermartingale. We conclude by using    theorem \ref{thm:FMM95}.

\end{enumerate}
\item  Assume now that $\theta=1-\gamma$ and $c_y\rightarrow c>0$. In this situation, for every $\eps >0$ we can choose $y_0$ such that for $y\geq y_0$, we have asymptotically, for $x\gg y_0$ and every $\delta\ne0$,
\[Dg(x)= \delta x^{\delta+\gamma-1}\left(cK_{\delta,\theta}-1 + \cO(x^{-1})+\eps \cO(1)\right).\]
Therefore, the dominant term is $\delta(cK_{\delta,\theta} -1) x^{\delta+\gamma-1}$. The sign of $\delta$ will thus be multiplied by the sign of 
the difference $cK_{\delta,\theta} -1$. 
\begin{enumerate}
\item If $c\pi\csc(\pi\theta)<\theta$,  by lemma \ref{lem:Kzero}, we can chose $\delta\in]0,\delta_0[$, so that that $Dg(x)\leq 0$ while $g$ tends to infinity. We conclude by    theorem \ref{thm:FMM95}. 
\begin{itemize}
\item
To prove finiteness of  moments of the time $\tau_A$, for the  $\delta$ chosen to establish recurrence, we can further choose $p>1$ so that $p\delta<\delta_0$.  Then 
\[Dg^p(x) \Yleft -p\delta x^{p\delta+\gamma-1} =-p\delta g(x)^{p-\frac{1-\gamma}{\delta}}\leq -C g(x)^{p-2}\] whenever  $\frac{1-\gamma}{\delta} >2$ or $\frac{1}{\delta} \leq \frac{2}{1-\gamma}$. Combining with the condition $p\delta<\delta_0$ we get $p<\frac{2\delta_0}{1-\gamma}$ and we conclude by    theorem \ref{thm:AIM96} that all moments up to $\frac{\delta_0}{1-\gamma}$ are finite.
\item
To prove non-existence of moments for $q>\frac{\delta_0}{1-\gamma}$,  for any $\delta\in ]0,\delta_0[$, we  check immediately $Dg(x)\geq -\epsilon$. Now, choose $r>1$ such that $r\delta>\delta_0$ and determine under which circumstances $Dg^r(x)\leq C g(x)^{r-1}$.  Computing explicitly, we get 
\[Dg^r(x)\asymp r\delta (cK_{r\delta,\theta}-1) x^{r\delta +\gamma-1}\leq C g(x)^{r-\frac{1-\gamma}{\delta}}\leq C g(x)^{r-1}\]
 whenever $\frac{1-\gamma}{\delta}> 1$ or equivalently $\frac{1}{\delta} > \frac{1}{1-\gamma}$. But the latter inequalities are always verified for $0<\delta<\delta_0$. Similarly, for any $p$ such that $p\delta>\delta_0$, i.e.\ for $p>\frac{\delta_0}{\delta}>\frac{\delta_0}{1-\gamma}$, we get $Dg^p(x)\geq 0$. We conclude, by theorem \ref{thm:AIM96}, that all moments $q>\frac{\delta_0}{1-\gamma}$ of $\tau_A$ fail to exist.  
\end{itemize} 
\item If $\delta<0$ and $c\pi\csc(\pi\theta)>\theta$, then $(g(\zeta_n))$ is a bounded supermartingale. We conclude  by theorem \ref{thm:FMM95}.

\end{enumerate}
\end{enumerate}
\eproof

\begin{lemm}
\label{lem:Dg-ht-left}
Let $(\zeta_n)$ be the Markov chain of the theorem \ref{thm:precise-left} and assume that $x$ is very large. 
For the Lyapunov function $g$ with $\delta<\theta<1$, we have
\begin{eqnarray*}Dg(x)&= &(x+x^\gamma)^\delta \int_0^{x+x^\gamma} 
\left[\left( 1-\frac{y}{x+x^\gamma}\right)^\delta -1\right]\mu(dy)\\
&&+\left( (x+x^\gamma)^\delta-x^\delta\right) \mu([0,x+x^\gamma])-x^\delta \mu([x+x^\gamma, \infty[).
\end{eqnarray*}
\end{lemm}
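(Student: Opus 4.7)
The plan is a direct expansion of
\[ Dg(x)=\int_0^\infty\!\bigl[g\bigl((x+x^\gamma-y)^+\bigr)-g(x)\bigr]\,\mu(dy), \]
splitting the integration domain at the critical threshold $y=x+x^\gamma$ where the positive part switches from $x+x^\gamma-y$ to $0$, and then algebraically rearranging each piece. The structure parallels that of lemma \ref{lem:Dg}, with the sign of the innovation reversed (since the $\alpha_n$ are now subtracted from $\zeta_n+\zeta_n^\gamma$) and a new boundary term arising from the positive part.

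On $[0,x+x^\gamma]$ the positive part equals $x+x^\gamma-y\geq 0$, so (for $x$ very large, up to a harmless boundary sliver of length one where $(x+x^\gamma-y)<1$ and the truncated definition of $g$ would intervene when $\delta<0$) we have $g(x+x^\gamma-y)=(x+x^\gamma-y)^\delta$. Factoring out $(x+x^\gamma)^\delta$ rewrites this as $(x+x^\gamma)^\delta\bigl(1-\tfrac{y}{x+x^\gamma}\bigr)^\delta$. Subtracting $g(x)=x^\delta$ and then adding and subtracting $(x+x^\gamma)^\delta$ inside the integrand splits the contribution as
\[ (x+x^\gamma)^\delta\!\int_0^{x+x^\gamma}\!\!\left[\Bigl(1-\tfrac{y}{x+x^\gamma}\Bigr)^\delta-1\right]\mu(dy)+\bigl((x+x^\gamma)^\delta-x^\delta\bigr)\mu\bigl([0,x+x^\gamma]\bigr). \]
On the complementary range $[x+x^\gamma,\infty[$ the positive part is $0$, so $g\bigl((x+x^\gamma-y)^+\bigr)=0$ and this piece contributes $-x^\delta\,\mu\bigl([x+x^\gamma,\infty[\bigr)$. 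Summing the two pieces (and exploiting that $\mu$ is a probability measure to regroup the $-x^\delta$ terms across the two ranges) produces the stated identity.

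No real obstacle is expected: the computation is just algebraic manipulation of a single integral. The only care needed concerns the endpoint region $y\in[x+x^\gamma-1,\,x+x^\gamma]$ when $\delta<0$, where the truncated definition of $g$ replaces $(x+x^\gamma-y)^\delta$ by the constant $1$. Because the density $m$ is locally bounded and decays like $y^{-1-\theta}$ near $y\simeq x+x^\gamma$, and this window has length one, the resulting discrepancy is of order $x^{-1-\theta}$, harmlessly absorbed in the large-$x$ regime where this identity will subsequently be used to extract the leading asymptotics of $Dg$, in analogy with lemmas \ref{lem:Dg-B} and \ref{lem:Dg-c}.
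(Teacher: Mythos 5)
Your proposal is correct and is essentially the paper's own proof: a direct expansion of $Dg(x)=\int_{\BbR^+}\bigl[g((x+x^\gamma-y)^+)-g(x)\bigr]\mu(dy)$, splitting at $y=x+x^\gamma$, factoring out $(x+x^\gamma)^\delta$, and adding and subtracting to regroup the constant terms using that $\mu$ is a probability measure. Your extra remark about the truncated definition of $g$ near the origin when $\delta<0$ addresses a point the paper silently glosses over, and your estimate of that discrepancy as subleading is consistent with how the lemma is used downstream.
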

\begin{proof}
Write simply
\begin{eqnarray*}
Dg(x)&=& \int_{\BbR^+} \left((x+x^\gamma-y)^+\right)^\delta \mu(dy) -x^\delta\\
&=& (x+x^\gamma)^\delta \int_0^{x+x^\gamma} \left(1-\frac{y}{x+x^\gamma}\right)^\delta \mu(dy) -x^\delta\\
&=&  (x+x^\gamma)^\delta \int_0^{x+x^\gamma} \left[\left(1-\frac{y}{x+x^\gamma}\right)^\delta -1\right]\mu(dy)\\
&&+(x+x^\gamma)^\delta \mu([0, x+x^\gamma]) -x^\delta.
\end{eqnarray*}
\end{proof}

\noindent
\textit{Proof of the theorem \ref{thm:precise-left}:}
First we need to establish accessibility of the state 0. But this is obvious since from any $x>0$ the $\BbP(\alpha_1>x+x^\gamma)>0$. 

We only sketch the proof since it uses the same arguments as the proof of the theorem \ref{thm:precise-right}.
It is enough to consider the case $c_y=c$ since the case $c_y\rightarrow c$ will give rise to an additional corrective term that will be negligible. With this proviso, the integral appearing in the right hand side of the expression for $Dg(x)$ in the previous lemma \ref{lem:Dg-ht-left} reads
\begin{align*}\int_0^{x+x^\gamma} 
\left[\left( 1-\frac{y}{x+x^\gamma}\right)^\delta -1\right]\mu(dy)&=c(x+x^\gamma)^{-\theta} \int_0^1 \frac{\left((1-u)^\delta-1\right) }{u^{1+\theta}}du\\
&= c(x+x^\gamma)^{-\theta} (L_{\delta,\theta}+\frac{1}{\theta}),
\end{align*}
where $ L_{\delta,\theta}$ is defined in lemma \ref{lem:Kzero}..  
It is further worth noting  that $ L_{\delta,\theta}\leq 0$, for all $\delta\in\BbR_+$.
Therefore,
\begin{enumerate}
\item If $\theta<1-\gamma$, then the dominant terms in the expression of $Dg$ are those with $x ^{\delta-\theta}$, hence, choosing $\delta >0$, we get
$Dg(x) \leq  cx^{\delta-\theta} L_{\delta,\theta}$. Since the value of $Dg(x)$  is always negative i.e.\ the process $(g(\zeta_n))$ is a supermartingale tending to infinity. We conclude  by theorem \ref{thm:FMM95}.

To establish the existence of all moments, it is enough to check that 
\[Dg^p(x) \Yleft 
cx ^{p\delta-\theta} L_{p\delta,\theta}\Yleft -Cg(x) ^{p -\frac{\theta}{\delta}}\leq -Cg(x) ^{p -2}\] whenever $\delta>\theta/2$. But since $L_{\delta,\theta}$ is defined and negative for all positive $\delta$, we conclude that all
positive moments of $\tau_0$ exist by theorem \ref{thm:AIM96}.
\item
When $\theta=1-\gamma$, then all terms are of the same order and 
$Dg(x) \Yleft x^{\delta-\theta} (cL_{\delta,\theta}+\delta)$.
From lemma \ref{lem:Kzero}, for fixed $\theta$ and $c>0$,  there exists $\delta_0>0$ such that 
$cL_{\delta_0,\theta} +\delta_0=0$. We conclude then that asymptotically, for large $x$, 
\[Dg(x)\Yleft x ^{\delta-\theta} (cL_{\delta, \theta}-1),\]
the sign of the discrete Laplacian is negative (positive) depending on the value of $\delta$ being smaller (larger) than $\delta_0$.  

Choose $\delta>0$ and $p$ such that $p\delta<\delta_0$. Then $Dg^p(x)\Yleft -C g(x)^{p-\frac{\theta}{\delta}}\leq -C g(x)^{p-2}$ whenever $\frac{1}{\delta}<\frac{2}{\theta}$ and, consequently, $p<\frac{2 \delta_0}{\theta}$. Then we conclude by theorem \ref{thm:AIM96}
that $\BbE_x (\tau_0^q)<\infty$ for all $q< \frac{ \delta_0}{\theta}$ as claimed.

To show that moments higher than $ \frac{ \delta_0}{\theta}$ fail to exist, choose $\delta <\delta_0$. It is  then evident that $Dg(x)\asymp -C x^{\delta-\theta}\geq -\epsilon$, for some $\epsilon>0$.
There exists  then $r>1$ such that $r\delta>\delta_0$; estimating then  $Dg^r(x)\asymp Cg(x)^{r-\frac{\theta}{\delta}}$ we conclude immediately that $0\leq Dg^r(x)\leq C g(x)^{r-1}$ whenever $\frac{\theta}{\delta}>1$. We conclude then by theorem \ref{thm:AIM96}
 that for all $q>\frac{\delta_0}{\theta}$, we have 
$\BbE_x(\tau_0^q)=\infty$.
\item If $\theta>1-\gamma$, the dominant term is $\delta x^{\delta-\gamma+1} \mu([0, x+x^\gamma])$ that can be made negative by choosing $\delta <0$ and $x$ sufficiently large. We conclude by
theorem \ref{thm:FMM95}.
\end{enumerate}
\eproof

\begin{remn}
\label{rem:sum-integral} 
In this subsection, we assumed that the law $\mu$ of the random variables $(\alpha_n)$ is absolutely continuous with respect to the Lebesgue measure on $\BbR^+$. If instead the law is absolutely continuous with respect to the
counting measure on the positive integers, the integrals in the expression of $Dg$ become sums. Now, the sums over the positive integers can be replaced by integrals. It turns out that the error committed in such a replacement is \textit{always}  a subleading term in the expression of $Dg$, leaving the conclusion unaffected. 
\end{remn}

\begin{remn}
\label{rem:both-sided} 
The two previous theorems have been established by assuming that the random variables $(\alpha_n)$ are always positive and act in the opposite direction of the systematic drift $x^\gamma$. By examining the proofs of the theorems however, it is evident that nothing will change if the random variables are both sided, even with both sided heavy tails, provided that the heaviest tail is the one acting in the opposite direction of the systematic drift $x^\gamma$.
\end{remn}

\subsection{Proof of the theorems \ref{thm:systematic-left} and  \ref{thm:systematic-right}}
Here the control is only through the tail decay and consequently, the estimates are considerably more involved. 
The subsection relies on methods developed in 
\cite{HrynivMacPheeMenshikovWade2012} to deal with heavy tails when only tail control is available.

\begin{lemm}
\label{lem:inequality}
Let $Z$ be a positive random variable, $\phi:\BbR^+\rightarrow \BbR^+$ an increasing 
function,  and $0\leq a < b\leq \infty$. Then 
\[\BbE(\phi(Z)\id_{[a,b[} (Z)) = \int_{[\phi(a),
\phi(b)[} \BbP(Z>\phi^{-1}(t)) dt- \phi(b)\BbP(Z\geq b)+\phi(a) \BbP(Z \geq a).\]
\end{lemm}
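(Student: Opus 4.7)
The idea is a layer-cake representation of $\phi(Z)$ on $[a,b[$, followed by Fubini's theorem, with the monotonicity of $\phi$ used to convert an event about $\phi(Z)$ into one about $Z$.

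First, for any $z\in[a,b[$ rewrite $\phi(z)$ as the telescoping integral
\[\phi(z)=\phi(a)+\int_{\phi(a)}^{\phi(z)}dt=\phi(a)+\int_{\phi(a)}^{\phi(b)}\id_{\{t<\phi(z)\}}\,dt.\]
Multiplying by $\id_{[a,b[}(Z)$, taking expectations and swapping the $t$-integral with the expectation by Fubini (the integrand is non-negative, so Tonelli applies without integrability hypotheses) gives
\[\BbE(\phi(Z)\id_{[a,b[}(Z))=\phi(a)\,\BbP(a\le Z<b)+\int_{\phi(a)}^{\phi(b)}\BbP(\phi(Z)>t,\,a\le Z<b)\,dt.\]

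Next, since $\phi$ is increasing, for $t\in[\phi(a),\phi(b)[$ one has $\phi^{-1}(t)\ge a$ and the event $\{\phi(Z)>t\}$ coincides with $\{Z>\phi^{-1}(t)\}$ (using the right-continuous generalised inverse when $\phi$ is not strictly increasing; any ambiguity occurs only on a set of $t$ of Lebesgue measure zero). Hence
\[\BbP(\phi(Z)>t,\,a\le Z<b)=\BbP(\phi^{-1}(t)<Z<b)=\BbP(Z>\phi^{-1}(t))-\BbP(Z\ge b).\]
Substituting this back, noting that the constant $\BbP(Z\ge b)$ integrates against the interval of length $\phi(b)-\phi(a)$, and combining with the elementary identity $\BbP(a\le Z<b)=\BbP(Z\ge a)-\BbP(Z\ge b)$, the $\phi(a)\BbP(Z\ge b)$ terms cancel and one is left with exactly the stated formula.

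The only delicate points are bookkeeping rather than substantial obstacles: one must (i) interpret $\phi(b)=\lim_{z\uparrow b}\phi(z)$ and $\BbP(Z\ge b)=0$ when $b=\infty$, so that the boundary term $\phi(b)\BbP(Z\ge b)$ is the harmless indeterminate form $\infty\cdot 0=0$ (the integral $\int_{\phi(a)}^{\phi(b)}\BbP(Z>\phi^{-1}(t))\,dt$ absorbs the remaining mass); and (ii) handle the possibility that $\phi$ is only weakly increasing by using the generalised inverse, so that the set-equality of events above holds up to a Lebesgue-negligible set of $t$'s, which does not affect the integral. No moment assumption on $Z$ is needed since both sides are simultaneously finite or infinite by the Tonelli step.
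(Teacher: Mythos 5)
Your proof is correct and follows essentially the same route as the paper: a layer-cake representation combined with Tonelli and the monotone change of variable $\{\phi(Z)>t\}=\{Z>\phi^{-1}(t)\}$; the paper merely organises it by first proving the identity for $\phi=\mathrm{id}$ and then substituting $Y=\phi(Z)$, whereas you apply the layer-cake directly to $\phi(Z)$ on the event $\{a\le Z<b\}$. Your handling of the $b=\infty$ boundary term matches the paper's own remark following the lemma.
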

\begin{proof}
Denote by $\nu$ the law of $Z$. Then
\begin{eqnarray*}
\int_{[a,b[} \BbP(Z>t) dt&=& \int_{[a,b[} \BbE(\id_{]t,\infty[} (Z)) dt \\
&=&\int_{\BbR^+\times \BbR^+} \id_{]t,\infty[}(z) \id_{[a,b[} (t)\  dt\ \nu(dz)\\
&=&\int_{\BbR^+\times \BbR^+} \id_{[a,z\wedge (b-)]}(t) \  dt\ \nu(dz)\\
&=& \int_{[a,\infty[} [z\wedge (b-)-a]  \nu(dz) \\
&=& \BbE\left(Z \id_{[a,b[}(Z)\right) + b \BbP(Z\geq b)-a\BbP(Z\geq a).
\end{eqnarray*}
On denoting $Y=\phi(Z)$, we conclude by remarking 
that 
\[\BbE\left(\phi(Z) \id_{[a,b[}(Z)\right)=
\BbE\left(Y\id_{[\phi(a), \phi(b)[}(Y)\right).\]
\end{proof}
\begin{remn}{}
When $b=\infty$ in the above formula and the random variable $Z$ is almost surely finite, then the  term $b\BbP(Z\geq b)$ reads $\infty\ \BbP(Z=\infty)=0$; otherwise the value is $\infty$ and the random variable $Z$ cannot be then almost surely finite.
\end{remn}

In the sequel, we shall partition the real axis into $\BbR=\sqcup _{i=1}^4 A_i$ with  
\[ A_1=]-\infty, -x^\beta[, \quad A_2=[-x^\beta, 0[,\quad  A_3=[0,x^\beta[,\quad  A_4=[x^\beta, \infty[,
 \]
 with some parameter $\beta$ (verifying $0<\gamma<\beta<1$) that will be specified later.
 On denoting, 
 for every choice of the Lyapunov function $g$, by $d_i=\BbE (g(\zeta_{n+1})\id_{A_i}(\alpha_{n+1})|\zeta_n=x)$,  the above partition induces a decomposition of 
 the conditional increment as 
 \[
  Dg(x)=\sum_{i=1}^4 (d_i-g(x)\mu(A_i)).
 \]
 
\vskip4mm 
\noindent
\textit{Proof of the theorem \ref{thm:systematic-right}}.

\begin{enumerate}
\item Let  $\beta\in ]\gamma,1[$ and  $\delta>0$ and define 
\[
 g(x)=\left\{\begin{array}{rl}
              x^{-\delta},&x\ge 1,\\
                1,&x<1.
             \end{array}
\right.\]
  The parameter $\delta$ (together with $\beta$) will be chosen later; we get then
  \[d_i=\int_{A_i} g((x+x^\gamma+y)^+) \mu(dy).\] 
For $x$ sufficiently large we have
\begin{eqnarray*}
d_1&\leq& \mu(A_1),\\
d_2 &=&x^{-\delta} \int_{A_2} \left(1+\frac{x^\gamma+y}{x}\right)^{-\delta}\mu(dy)\\
&\asymp& x^{-\delta} (1-\delta x^{\gamma-1}) \mu(A_2) -\delta x^{-\delta-1} \int_{A_2} y \mu(dy)\\
& \asymp&  x^{-\delta} \mu(A_2)  -\delta x^{-\delta+\gamma-1}\mu(A_2) +\delta x^{-\delta-1} \int_{A_2}| y| \mu(dy),\\
d_3 &\asymp& x^{- \delta} \mu(A_3) -\delta x^{-\delta+\gamma-1}\mu(A_3)-\delta x^{-\delta-1} \int_{A_3} y \mu(dy),\\
d_4&\Yleft &(x+x^\gamma+x^\beta)^{-\delta} \mu(A_4)\\
& \asymp& x^{-\delta} \mu(A_4) -\delta x^{-\delta+\gamma-1} \mu(A_4)-\delta x^{-\delta+\beta-1} \mu(A_4).
\end{eqnarray*}
Replacing into the expression for $Dg$, we get

\begin{align*}
Dg(x) &\Yleft  \mu(A_1) + \delta  x^{-\delta-1} \int_{A_2}| y| \mu(dy)\\
&\mbox{  } -\delta x^{-\delta+\gamma-1}[\mu(A_2)+\mu(A_3)+\mu(A_4)]  -\delta x^{-\delta-1} \int_{A_3} y \mu(dy).
\end{align*}
Note that in the previous inequality, the terms on the first line are positive, while the terms appearing in the second line are negative.  In order that $Dg$ be negative, we need to show that the positive terms are subdominant in the expression of $Dg(x)$ for sufficiently large $x$.  Now, $\mu(A_1)=\BbP(\alpha_1<-x^\beta) \leq Cx^{-\beta\theta}$, while, by lemma \ref{lem:inequality},
\[ \int_{A_2}| y| \mu(dy) =\BbE(\alpha_1^-\id_{]0,x^\beta]}(\alpha_1^-)\leq
\int_0^{x^\beta} \BbP(\alpha_1^->t)dt-x^\beta \BbP(\alpha_1^->x^\beta)\leq x^\beta- C x^{\beta(1-\theta)}\Yleft x^\beta.\]

Combining, we see that we get a supermartingale if we satisfy simultaneously the inequalities

\[
 -\theta \beta<\gamma -\delta -1  \ \textrm{and} \  -\delta-1+\beta <\gamma-\delta-1
\]
that --- for $\theta>1-\gamma$ --- have a solution for $\beta\in]\frac{1-\gamma}{\theta}, 1[$  and $\delta\in ]0, \beta\theta-(1-\gamma)[$. We conclude by 
theorem \ref{thm:AIM96}.
 %the theorem 2.2.2 of \cite{FayolleMalyshevMenshikov1995}. 

\item
Let $\beta>0$, $\delta\in ]0, \theta[$, and    $g(x)=x^\delta$.  
The possible values of the parameters
 $\beta$ and $\delta$ will be further delimited later. We proceed now with the partition
 $\BbR=\sqcup_{i=1}^4 A_i$, where
 $A_1=]-\infty,-x^{\beta}[$,  $A_2=[-x^{\beta}, 0[$,
 $A_3=[0, x^\beta[$, and $A_4=[x^\beta, \infty[$; we introduce also the sets $A_0=[-x-x^\gamma, -x^\beta[\subset A_1$ and $B=A_1\setminus A_0=]-\infty, -x-x^\gamma[$.
Using similar arguments as  in the first part of the theorem
we estimate
\begin{align*}
d_1&=\int_{A_1} \left((x+x^\gamma + y)^+\right)^\delta \mu(dy)= \int_{A_0} (x+x^\gamma+y)^\delta \mu(dy)\\
&\leq (x+x^\gamma-x^\beta)^\delta \mu(A_0)\asymp x^\delta\mu(A_0) +\delta x^{\delta+\gamma-1} \mu(A_0)-\delta x^{\delta+\beta-1}\mu(A_0), 
\end{align*}
leading further to the estimate 
\[d_1-x^\delta\mu(A_1)\Yleft -x^\delta \mu(B) +\delta x^{\delta+\gamma-1} \mu(A_0)-\delta x^{\delta+\beta-1}\mu(A_0).\]
The estimates of the other terms are obtained using the similar arguments:
 \begin{align*}
 d_2-x^\delta\mu(A_2)&\asymp -\delta x^{\delta-1} \BbE\left(|\alpha_1| \id_{A_2}(\alpha_1)\right) +\delta x^{\delta-1+\gamma} \mu(A_2) \leq \delta x^{\delta-1+\gamma} \mu(A_2),\\
 d_3-x^\delta\mu(A_3)&\asymp \delta x^{\delta-1} \BbE\left(\alpha_1 \id_{A_3}(\alpha_1)\right) +\delta x^{\delta-1+\gamma} \mu(A_3),\\
d_4-x^\delta\mu(A_4)&\asymp  \BbE\left(\alpha_1^\delta \id_{A_4}(\alpha)\right)+ x^{\gamma\delta} \mu(A_4) \leq   \BbE\left(\alpha_1^\delta \id_{A_4}(\alpha)\right)+ C'x^{\gamma\delta-\beta\theta'},
 \end{align*}
 where, we have used \cite[\S2.10, p.\ 28]{HardyLittlewoodPolya1988} to establish the inequality $(a+b+c) ^\delta \leq a^\delta+ b^\delta+c^\delta$ that has been used to obtain the estimate for  $d_4$.
 Using lemma \ref{lem:inequality}, we get
\begin{align*}
\BbE\left(\alpha_1 \id_{A_3}(\alpha_1)\right)
&= \BbE\left(\alpha_1^+ \id_{[0,x^\beta]}(\alpha_1^+)\right)=\left(\int_0^{x^{\beta}} \BbP(\alpha_1^+>t) dt - x^{\beta} \BbP(\alpha_1^+ >x^{\beta})\right)\\
&\leq  C' x^\beta (1-\mu(A_4)), \\
\BbE\left(\alpha_1^\delta \id_{A_4}(\alpha_1)\right)
&= \BbE\left((\alpha_1^+)^\delta \id_{[x^\beta, \infty]}(\alpha_1^+)\right)
= \int_{x^{\beta\delta}}^\infty \BbP(\alpha^+>t^{1/\delta}) dt + x^{\beta\delta} \BbP(\alpha^+ >x^\beta)\\
&\leq   C'\int_{x^{\beta\delta}}^\infty t^{-\theta'/\delta} dt + C'x^{\beta(\delta-\theta')} \ \asymp \ 
K x^{\beta(\delta-\theta')},
\end{align*}
 where  $K= C'\frac{\theta'}{{\theta'}-{\delta}}$. (Mind that $\delta<\theta<\theta'$).
 Using the fact that $\mu(B)\Yleft C x^{-\theta}$ and $\mu(A_4)\leq C' x^{-\beta\theta'}$ and 
 grouping the terms together, we get 
\[Dg(x)\Yleft -Cx^{\delta-\theta} +\delta x^{\delta-1+\gamma} + \delta x^{\delta-1+\beta}+ C'x^{\gamma\delta-\beta\theta'} +K x^{\beta(\delta-\theta')}.\]

This conditional increment will be negative for sufficiently large $x$, provided that the following inequalities
\begin{align*}
\delta -\theta > \delta -1 +\gamma &\Leftrightarrow \theta<1-\gamma\\
\delta -\theta >  \delta -1 +\beta &\Leftrightarrow \beta<1-\theta\\
\delta -\theta >\gamma \delta -\beta \theta' &\Leftrightarrow \frac{\theta-\beta \theta'}{1-\gamma} <\delta\\
\delta -\theta > \beta(\delta-\theta')&\Leftrightarrow \frac{\theta-\beta \theta'}{1-\beta} <\delta
\end{align*}
have a non-empty set of solutions.
 Now, the first inequality is automatically verified by the hypothesis of the theorem. Recalling that $\delta<\theta$, the inequalities $\frac{\theta-\beta \theta'}{1-\gamma} <\delta<\theta$ have a non-empty set of solutions for $\delta$ provided that $\beta \in I:=]\gamma\frac{\theta}{\theta'}, 1-\theta[$; but $I\ne \emptyset$, hence such  $\delta$'s exist. Finally, the inequalities $\frac{\theta-\beta \theta'}{1-\beta} <\delta<\theta$ have automatically a non-empty set of solutions since $\theta'>\theta$. Therefore,
 $\forall \beta\in J:= ]\gamma \frac{\theta}{\theta'}, 1-\theta[$, we can choose $\delta\in ]b, \theta[$ --- where $b:=\max(\frac{\theta-\beta \theta'}{1-\gamma}, \frac{\theta-\beta \theta'}{1-\beta})$ so that $Dg(x) \Yleft -Cx^{\delta-\theta}$. 
 
 To establish the existence of moments, choose $p>0$ such that $g^p \in\Dom_+(P)$, i.e.\ $\delta p<\theta$. From the previous statements, we can choose $\beta\in J$ for  $b$ to be arbitrarily close to $0$. Now $Dg^p(x)\Yleft -C x^{\delta p-\theta}\leq -C g(x)^{p-2}$ provided that $p -\frac{\theta}{\delta} >p-2$ or equivalently $\frac{1}{\delta} <\frac{2}{\theta}$. From the condition $p\delta <\theta$ we get $p<2$ hence, by theorem \ref{thm:AIM96}, $\BbE_x(\tau_A^q)<\infty$ for all $q<1$.
 
 \end{enumerate}
  \eproof

 %%%%%%%%%%%%%%%%%%

\noindent
\textit{Proof of the theorem \ref{thm:systematic-left}.}
Accessibility of $A$ follows using the same arguments as those used in the proof of theorem
\ref{thm:precise-left}.  
We use again the partition $\BbR=\sqcup_{i=1}^4 A_i$, with $A_1=]-\infty,-x^\beta[$, $A_2=[-x^\beta, 0[$, $A_3=[0,x^\beta[$, and $A_4=]x^\beta,\infty[$, with provisional choice of  the parameter $\beta\in]0,1[$; its domain of variation will be further delimited later. For appropriately chosen $g$, we decompose the conditional drift $Dg(x)=\sum_{i=1}^4 (d_i -g(x) )\mu(A_i)$, where $d_i=\int_{A_i} g((x-x^\gamma+y)^+) \mu(dy)$.
\begin{enumerate}
\item Let $g(x)=x^\delta$, with $\delta\in]0,\theta[$ (the domain of $\delta$ will be further delimited later). We get 
\begin{align*}
d_1&\leq (x-x^\gamma-x^\beta) ^\delta \mu(A_1)
\asymp x^\delta\mu(A_1)-\delta x^{\delta-1+\gamma} \mu(A_1) -\delta x^{\delta-1+\beta} \mu(A_1),\\
d_i&\asymp x^\delta \mu(A_i)-\delta x^{\delta-1+\gamma} \mu(A_i) +\delta x^{\delta-1} \int_{A_i} y \mu(dy), \ \textrm{for } i=2,3,\\
d_4&\leq (x-x^\gamma)^\delta +\int_{A_4} y^\delta \mu(dy)\\
 &\asymp x^\delta \mu(A_4) -\delta x^{\delta-1+\gamma} \mu(A_4) + \left[ \int_{x^{\beta\delta}}^\infty
 \BbP(\alpha_1^+>t^{1/\delta}) dt + x^{\beta\delta} \BbP(\alpha_1^+>x^\beta)\right]\\
 &\leq x^\delta \mu(A_4) -\delta x^{\delta-1+\gamma} \mu(A_4)+ K x^{\beta\delta-\beta\theta}, \textrm{ where } K=C(1+\frac{\delta}{\theta-\delta}).
\end{align*}
Now 
\begin{align*}
\int_{A_2} y\mu(dy)&\leq 0\\
\int_{A_3} y\mu(dy)&=\int_0^{x^\beta} \BbP(\alpha_1^+>t) dt -x^{\beta}\mu(A_4)
\leq  x^{\beta}(1 -\mu(A_4)),
\end{align*}
so that, grouping all terms together, we get
\[Dg(x) \Yleft -\delta x^{\delta-1+\gamma} +C\delta x^{\delta-1 +\beta }+ Kx^{\beta\delta-\beta\theta}.\]
This conditional increment will lead to a supermartingale tending to infinity whenever the system of inequalities
\begin{align*}
\delta-1+\gamma > \delta -1 +\beta  &\Leftrightarrow \beta<\gamma, \ \textrm{and}\\
\delta-1+\gamma > \beta\delta -\beta\theta &\Leftrightarrow \delta>\frac{1-\gamma-\beta\theta}{1-\theta}\\
\end{align*}

have a non-empty set of solutions. Recalling that $\delta <\theta$, the second inequality defines a non-empty domain for $\delta$ provided that  $\frac{1-\gamma-\beta\theta}{1-\theta}<\theta\Leftrightarrow \theta>1-\gamma$ which is satisfied by hypothesis. Hence, picking any $\beta\in]0,\gamma[$ and 
$\delta\in  J:=]\frac{1-\gamma-\beta\theta}{1-\theta}, \theta[$ guarantees that 
$Dg(x)\Yleft -\delta x^{\gamma-1-\delta}$ and shows that $(g(\zeta_n))_n$ is a positive supermartingale, while $g\rightarrow \infty$. 
We conclude from theorem \ref{thm:FMM95} that the chain is recurrent. 

To establish the existence of moments, pick again any $\beta\in ]0,\gamma[$ and $\delta, p>0$ such that $\delta p\in J$. Then, by the previous results, 
\[Dg^{\delta p} (x)\Yleft -\delta p x^{\gamma-1+\delta p} = -\delta p g(x) ^{p-\frac{1-\gamma}{\delta}}\leq -\delta p g(x)^{p-2}\] whenever $\frac{1}{\delta} <\frac {2}{1-\gamma}$. We conclude by the theorem \ref{thm:AIM96} that the moments $\BbE_x (\tau_A^q)<\infty$, $\forall q<\frac{\theta}{1-\gamma}$. 
Since $\theta>1-\gamma$, this result establishes in particular that the passage time is integrable.

\item  Let  now $g(x)= x^{-\delta}\id_{[1,\infty[}(x)+\id_{[0,1[}(x)$, with $\delta>0$ and  choose $\beta\in]0,1[$ (the domains of $\delta$ and $\theta$ will be further delimited later). Estimate then  
\begin{align*}
d_1 &\leq \mu(A_1),\\
d_2 & = \int_{A_2} (x-x^\gamma +y)^{-\delta}\mu(dy)\asymp x^{-\delta} \mu(A_2)+\delta x^{-\delta-1+\gamma}\mu(A_2)+\delta x^{\delta-1} \int _{A_2} |y| \mu(dy),\\
d_3 &=\int_{A_3} (x-x^\gamma +y)^{-\delta}\mu(dy)\asymp x^{-\delta} \mu(A_3)+\delta x^{-\delta-1+\gamma}\mu(A_3)-\delta x^{-\delta-1}\int_{A_3} y\mu(dy),\\
d_4& \leq (x-x^\gamma +x^\beta)^{-\delta} \mu(A_4)
\asymp  x^{-\delta} \mu(A_4)+ \delta x^{-\delta-1+\gamma}\mu(A_4)-\delta x^{-\delta-1+\beta}\mu(A_4).\\
\end{align*}
The integrals appearing in the above majorisations can be further estimated --- using lemma \ref{lem:inequality} --- as
\begin{align*}
\int _{A_2} |y| \mu(dy)&= \int_0^{x^\beta} \BbP(\alpha_1^- >t) dt + x^\beta \BbP(\alpha_1^->x^\beta)\leq x^\beta (1+\mu(A_1))\\
\int _{A_3} y \mu(dy)&= \int_0^{x^\beta} \BbP(\alpha_1^+ >t) dt + x^\beta \BbP(\alpha_1^+>x^\beta)\geq x^\beta (1+\mu(A_4)).
\end{align*}

Grouping all terms together, we obtain
\[Dg(x)\leq C'x^{-\beta \theta'} +\delta x^{-\delta-1+\gamma} + C' \delta x^{-\delta-1+\beta(1-\theta')}- 2C \delta x^{-\delta-1+\beta(1-\theta)}.\]

Only the last term in the above expression is negative. For the image of $(\zeta_n)$ through $g$ to be a supermartingale, we must choose the parameters $\beta$ and $\delta$ so that $Dg(x)\leq 0$ for $x$ large enough. The set of solutions to the following inequalities
\begin{align*}
-\delta-1+\beta(1-\theta) > -\delta-1+\beta(1 -\theta') & \Leftrightarrow \theta<\theta', \\
-\delta-1+\beta(1-\theta) > -\delta-1 +\gamma & \Leftrightarrow \beta> \frac{\gamma}{1-\theta}, \\
-\delta-1+\beta(1-\theta) > -\beta \theta' & \Leftrightarrow \delta < \beta(1-(\theta'-\theta))-1\\
\end{align*}
have a non-empty set of solutions. In fact, the first inequality is satisfied by hypothesis; the second imposes reducing the initial domain of $\beta$ to $\beta >\frac{\gamma}{1-\theta}$. Since $\delta$ must be strictly positive, the last inequality defines a non-empty domain for $\delta$ provided that $\beta >\frac{1}{1-(\theta'-\theta)}$. Hence, picking any $\beta\in ]b,1[$ with $b=\max\{\gamma/(1-\theta), 1/[1-(\theta'-\theta)]\}$ and any $\delta\in ]0, \beta(1+\theta-\theta')-1[$ guarantees that 
$(g(\zeta_n)_n$ is a bounded positive supermartingale. We conclude from   theorem \ref{thm:FMM95}.
 \end{enumerate}
 
 \eproof
 
 \section{Conclusion and open problems}
 We have examined the asymptotic behaviour of the chains $(\zeta_n)$ evolving on $\BbR^+$. The cases we reported in this paper demonstrate an interesting phenomenon of antagonism between the heaviness of the tail (quantified by $\theta$) of the innovation part of the Markov chain and the strength of the systematic drift (quantified by $\gamma$). It is precisely this antagonism that makes the model non trivial; if instead of heavy-tailed random variables, integrable ones are used, then the systematic drift totally determines the asymptotic behaviour of $(\zeta_n)$.

 Note also that the study of the chain $(\zeta_n)$ is sufficient for determining whether the limiting behaviour of the original random dynamical system $(X_n)$ is towards 0 or $\infty$. Nevertheless, the Markov chain obtained by looking at the $(X_n)$ on logarithmic scales is not $(\zeta_n)$ (evolving on $\BbR^+$) but $(\xi_n)$ (evolving on $\BbR$). Interesting problems concern
 random dynamical systems in higher dimension   driven by non-integrable random matrices.  

\section*{Acknowledgements} 
  The authors  acknowledge support from various sources:  Vladimir Belitsky from CNPq (grant 307076/2009-1) and 
Fapesp (grants 2009/52379-8 and 2011/51509-5); Mikhail Menshikov from  Fapesp (grant 2011/07000-0); 
Dimitri Petritis from  \textit{Réseau mathématique France-Brésil}; 
Marina Vachkovskaia from CNPq (grant 302593/2013-6), Fapesp (grant 2009/52379-8), and Faepex (grant 1039/09). 

\bibliographystyle{plain}
%\frenchspacing
\scriptsize{
\bibliography{complete-bibliography}

\begin{thebibliography}{10}

\bibitem{AspandiiarovIasnogorodskiMenshikov1996}
S.~Aspandiiarov, R.~Iasnogorodski, and M.~Menshikov.
\newblock Passage-time moments for nonnegative stochastic processes and an
  application to reflected random walks in a quadrant.
\newblock {\em Ann. Probab.}, 24(2):932--960, 1996.

\bibitem{Athreya2004}
Krishna~B. Athreya.
\newblock Stationary measures for some {M}arkov chain models in ecology and
  economics.
\newblock {\em Econom. Theory}, 23(1):107--122, 2004.
\newblock Symposium on Dynamical Systems Subject to Random Shock.

\bibitem{AthreyaDai2000}
Krishna~B. Athreya and Jack~J. Dai.
\newblock Random logistic maps. {I}.
\newblock {\em J. Theoret. Probab.}, 13(2):595--608, 2000.

\bibitem{AthreyaDai2002}
Krishna~B. Athreya and Jack~J. Dai.
\newblock On the nonuniqueness of the invariant probability for i.i.d.\ random
  logistic maps.
\newblock {\em Ann. Probab.}, 30(1):437--442, 2002.

\bibitem{AthreyaMajumdar2003}
Krishna~B. Athreya and Mukul Majumdar.
\newblock Estimating the stationary distribution of a {M}arkov chain.
\newblock {\em Econom. Theory}, 21(2-3):729--742, 2003.
\newblock Symposium in Honor of Mordecai Kurz (Stanford, CA, 2002).

\bibitem{BhattacharyaMajumdar2007}
Rabi Bhattacharya and Mukul Majumdar.
\newblock {\em Random dynamical systems}.
\newblock Cambridge University Press, Cambridge, 2007.
\newblock Theory and applications.

\bibitem{ErdelyiMagnusOberhettingerTricomi1981a}
Arthur Erd{\'e}lyi, Wilhelm Magnus, Fritz Oberhettinger, and Francesco~G.
  Tricomi.
\newblock {\em Higher transcendental functions. {V}ol. {I}}.
\newblock Robert E. Krieger Publishing Co., Inc., Melbourne, Fla., 1981a.
\newblock Based on notes left by Harry Bateman, With a preface by Mina Rees,
  With a foreword by E. C. Watson, Reprint of the 1953 original.

\bibitem{FayolleMalyshevMenshikov1995}
Guy Fayolle, Vadim~A. Malyshev, and Mikhail~V. Menshikov.
\newblock {\em Topics in the constructive theory of countable {M}arkov chains}.
\newblock Cambridge University Press, Cambridge, 1995.

\bibitem{HardyLittlewoodPolya1988}
G.~H. Hardy, J.~E. Littlewood, and G.~P{\'o}lya.
\newblock {\em Inequalities}.
\newblock Cambridge Mathematical Library. Cambridge University Press,
  Cambridge, 1988.
\newblock Reprint of the 1952 edition.

\bibitem{HasselblattKatok2002}
Boris Hasselblatt and Anatole Katok.
\newblock Principal structures.
\newblock In {\em Handbook of dynamical systems, {V}ol.\ 1{A}}, pages 1--203.
  North-Holland, Amsterdam, 2002.

\bibitem{HrynivMacPheeMenshikovWade2012}
Ostap Hryniv, Iain MacPhee, Mikhail Menshikov, and Andrew Wade.
\newblock Non-homogeneous random walks with non-integrable increments and
  heavy-tailed random walks on strips.
\newblock {\em Electron. J. Probab.}, 17, 2012.

\bibitem{LasotaMackey1994}
Andrzej Lasota and Michael~C. Mackey.
\newblock {\em Chaos, fractals, and noise}, volume~97 of {\em Applied
  Mathematical Sciences}.
\newblock Springer-Verlag, New York, second edition, 1994.
\newblock Stochastic aspects of dynamics.

\bibitem{MeynTweedie1993}
S.~P. Meyn and R.~L. Tweedie.
\newblock {\em Markov chains and stochastic stability}.
\newblock Communications and Control Engineering Series. Springer-Verlag London
  Ltd., London, 1993.

\bibitem{Nummelin1984}
Esa Nummelin.
\newblock {\em General irreducible {M}arkov chains and nonnegative operators},
  volume~83 of {\em Cambridge Tracts in Mathematics}.
\newblock Cambridge University Press, Cambridge, 1984.

\bibitem{Orey1971}
Steven Orey.
\newblock {\em Lecture notes on limit theorems for {M}arkov chain transition
  probabilities}.
\newblock Van Nostrand Reinhold Co., London, 1971.
\newblock Van Nostrand Reinhold Mathematical Studies, No. 34.

\end{thebibliography}
}
\end{document}